\newtheorem{thm}{Theorem}
\newtheorem{prop}{Proposition}
\newtheorem{cor}{Corollary}
\newtheorem{exm}{Example}
\newtheorem{rem}{Remark}
\def \Zl {{\mathbbm Z}}
\def \Ql {{\mathbbm Q}}
\def \Rl {{\mathbbm R}}
\def \Cl {{\mathbbm C}}
\def \e {{\bf e}}
\def \vl {{\bf v}}
\def \x {{\bf x}}
\def \o {{\bf 0}}
\newcommand{\ob}[1]{\left(#1\right)}
\newcommand{\cb}[1]{\left\lbrace #1\right\rbrace}
\newcommand{\tb}[1]{\left[#1\right]}
\newcommand{\up}[1]{\overset{#1}{\uplus}~}
\newcommand{\eig}[1]{\sigma\left(#1\right)}
\title{\it Laplacian quantum walks on blow-up graphs}
\author[1]{Hermie Monterde}
\author[2]{Hiranmoy Pal}
\author[1]{Steve Kirkland}
\affil[1]{Department of Mathematics, University of Manitoba, Winnipeg, MB, Canada R3T 2N2}
\affil[2]{Department of Mathematics, National Institute of Technology Rourkela, India-769008}
\date{\today}
\begin{document}
\maketitle

\begin{center}
    \textit{Dedicated to Daniel Szyld on the occassion of his 70th birthday}
\end{center}

\begin{abstract}
This paper is a sequel to the work of Bhattacharjya et al.\ (J. Phys. A-Math. 57.33: 335303, \url{https://doi.org/10.1088/1751-8121/ad6653}) on quantum state transfer on blow-up graphs, where instead of the adjacency matrix, we take the Laplacian matrix as the time-independent Hamiltonian associated with a blow-up graph. We characterize strong cospectrality, periodicity, perfect state transfer (LPST) and pretty good state transfer (LPGST) on blow-up graphs. We present several constructions of blow-up graphs with LPST and produce new infinite families of regular graphs where each vertex is involved in LPST. We also determine LPST and LPGST in blow-ups of classes of trees. Finally, if $n\equiv 0$ (mod 4), then the blow-up of $n$ copies of a graph $G$ has no LPST, but we show that under certain conditions, the addition of an appropriate matching this blow-up graph results in LPST.\\

\noindent {\it Keywords:} Perfect state transfer, Pretty good state transfer, Graph spectra, Blow-up graph, Twin vertices, Laplacian matrix. \\

\noindent {\it MSC: 05C50, 05C76, 15A16, 81P45.}
\end{abstract}

\section{Introduction}

Accurate transfer of quantum states is a key task in the setting of quantum computing. For this reason, there is considerable interest in constructing  networks of interacting qubits possessing desirable quantum state transfer properties. The idea of using a network of interacting qubits to transmit quantum information dates back to Bose \cite{bose}, who represented the network as an undirected graph wherein qubits  correspond to vertices and entangled pairs  correspond to edges. A time--independent Hamiltonian is specified (typically either the adjacency matrix or the Laplacian matrix, though other candidates for the Hamiltonian are available) and  the transfer of  quantum states is modelled by a quantum walk, which we now describe in further detail.

Throughout, we let $G$ be a simple (loopless) connected unweighted undirected graph with vertex set $V$. A \textit{(continuous-time) quantum walk} on $G$ with Hamiltonian $L$ is described by the unitary matrix
\begin{center}
    $U(t)=\exp{\ob{itL}}$
\end{center}
where $t\in\Rl$, $i=\sqrt{-1}$, and $L$ is the Laplacian matrix of $G$. Since $U(t)$ is unitary, for each $u \in V$ we have $\sum_{v\in V} | U(t)_{u,v}|^2=1.$ Consequently, the quantity $| U(t)_{u,v}|^2$ is interpreted as the probability that state $u$ is transmitted to state $v$ at time $t.$

We say that $G$ has \textit{ Laplacian perfect state transfer} (LPST) between vertices $u$ and $v$ if there is a time $\tau>0$ and some $\gamma\in\Cl$ such that
\begin{equation}
\label{uv}
U\ob{\tau}\mathbf{e}_u=\gamma\mathbf{e}_v.
\end{equation} 
In this case $| U(\tau)_{u,v}|^2=1$, so the quantum state initially at vertex $u$ is transmitted to vertex $v$ at time $\tau$ with probability one. If we also assume that $u=v$ in (\ref{uv}), then $u$ is said to be \textit{periodic} in $G$ at time $\tau$. A graph is \textit{periodic} if it is periodic at all vertices at the same time. The study of perfect state transfer was initiated by Bose \cite{bose} and Christandl et al. \cite{chr1}, who used the adjacency matrix as the Hamiltonian. However, Godsil showed that perfect state transfer in simple unweighted graphs is rare relative to the adjacency matrix \cite{god2}. A more general fact was shown by Godsil, Kirkland and Monterde which implies that LPST in simple unweighted graphs is also rare \cite{godsil2025perfect}. The rarity of perfect state transfer led to the notion of pretty good state transfer \cite{god1,vin}. A graph $G$ exhibits Laplacian \textit{pretty good state transfer} (LPGST) between vertices $u$ and $v$ if there is a sequence $\{\tau_k\}\subseteq\Rl$ and some $\gamma\in\Cl$ such that
\begin{equation}
\label{uv1}
\displaystyle\lim_{k\rightarrow \infty}U\ob{\tau_k}\mathbf{e}_u=\gamma\mathbf{e}_v.
\end{equation}
That is, $|U(t)_{u,v}|^2$ can be made arbitrarily close to 1 through appropriate choices of $t$. If we also assume that $u=v$ in (\ref{uv1}), then $G$ is said to be \textit{almost periodic} at $u$ relative to the sequence $\{\tau_k\}$. We say that $G$ is \textit{almost periodic} if there is a sequence $\{\tau_k\}\subseteq\Rl$ and some $\gamma\in\Cl$ such that
\begin{equation}
\label{uv2}
\lim\limits_{k\to\infty}U\ob{\tau_k}=\gamma I
\end{equation}
where $I$ is the identity matrix of appropriate order. The complex number $\gamma$ in (\ref{uv}), (\ref{uv1}) and (\ref{uv2}) is called a \textit{phase factor} and has modulus one because $U(t)$ is unitary for all $t\in\Rl$. 

The study of graphs with useful state transfer properties includes the construction of new such families of graphs from old ones. For example,   
a \textit{blow-up} of $n$ copies of a graph $G$, denoted $\up{2}G$, is the graph obtained by replacing every vertex of $G$ by an independent set of size $n$, where the copies of two vertices in $G$ are adjacent in the blow-up if and only if the two vertices are adjacent in $G$. Motivated by the work of Ge at al.\ on perfect state transfer in lexicographic products \cite{ge}, Bhattacharjya et al.\ investigated quantum walks on blow-up graphs  relative to the adjacency matrix \cite{bhattacharjya2024quantum}. In this paper, we study quantum walks on blow-up graphs relative to the Laplacian matrix.

In Section \ref{sec:trans}, we derive the transition matrix of the quantum walk on a blow-up graph relative to the Laplacian matrix. This allows us in Sections \ref{sec:per}, \ref{sec:sc} and \ref{sec:pgst} to provide  characterizations of periodicity (Theorem \ref{per}), strong cospectrality (Theorem \ref{sc}), LPGST (Theorem \ref{th1}) and LPST (Theorem \ref{th2}) in a blow-up graph. In Sections \ref{sec:pgst}, \ref{sec:gp} and \ref{sec:join}, we present constructions of blow-up graphs with LPST using Hadamard diagonalizable graphs, Cartesian products, directs products, and joins (see Theorems \ref{cart}, \ref{dir} and \ref{joins}). This leads to infinite families of regular blow-up graphs such that each vertex is involved in LPST but the underlying graphs do not admit LPST (see Remark \ref{rem}). In Section \ref{sec:path}, we characterize LPGST in blow-ups of paths and double stars. In particular, we show that blow-ups of paths do not admit LPGST (Theorem \ref{path}), in contrast to the adjacency case where infinite families of blow-up graphs admit LPGST \cite[Theorem 8]{bhattacharjya2024quantum}. In Section \ref{sec:pert}, we show that if $n\equiv 0$ (mod 4), then under mild conditions, the addition of an appropriate matching in $\up{n}G$ results in LPST (see Theorem \ref{pt1}). Note that since $n\equiv 0$ (mod 4) in this case, we have $n\geq 3$, and so $\up{n}G$ does not admit LPST. We discuss open problems in Section \ref{sec:fw}.

There are similarities between the Laplacian and adjacency quantum walks on blow-up graphs, such as the absence of strong cospectrality in $\up{n}G$ for all $n\geq 3$. However, there are also  stark differences. Laplacian periodicity is preserved in a blow-up graph, but not adjacency periodicity. The characterizations of Laplacian and adjacency strong cospectrality in $\up{2}G$ also do not coincide unless $G$ is  regular (see Theorem \ref{sc}(2) and \cite[Theorem 2(2)]{bhattacharjya2024quantum}, respectively). Moreover, in the adjacency case, Theorem 4(3) (respectively, Theorem 3(3)) in \cite{bhattacharjya2024quantum} states that LPST (respectively, LPGST) between two copies of a vertex in $\up{2}G$ is equivalent to periodicity (respectively, almost periodicity) of the vertex in $G$ with the appropriate phase factor. However, for the Laplacian case, this is not enough; we need additional number-theoretic conditions on the degree of the vertex in $G$ for LPST or LPGST to occur in $\up{2}G$ (see Theorem \ref{th1}(3) and Theorem \ref{th2}(3), respectively). These observations reveal the fundamental differences in the behaviour of Laplacian and adjacency quantum walks on blow-up graphs.

\section{Transition matrix}\label{sec:trans}

A \textit{blow-up} of $n$ copies of $G$, denoted $\up{n}G$, is the graph with vertex set $\Zl_n\times V$, where vertices $(l,u)$ and $(m,v)$ are adjacent in $\up{n}G$ if and only if $u$ and $v$ are adjacent in $G$. Our goal in this section is to derive the spectral decomposition for the transition matrix of $\up{n}G$.

Let $J_n$ be the $n \times n$ all-ones  matrix. Considering the lexicographic ordering on $\Zl_n\times V(G),$ the adjacency matrix $A_n$ of $\up{n}G$ is given by $A_n=J_n\otimes A$, i.e. the Kronecker product of $J_n$ and $A$. The Laplacian matrix of $\up{n}G$ can be written as
\[L_n=nI_n\otimes D - J_n\otimes A,\]
where $D=\text{diag}\ob{d_1,\ldots,d_m}$ is the degree matrix of $G$. Here $L=D-A$ is the Laplacian matrix of $G$. Suppose
\[L=\sum_{j=1}^{r}\lambda_rE_r\]
is the spectral decomposition of $L$ (here $\lambda_j$ is an eigenvalue of $L$ with corresponding eigenprojection matrix $E_j, j=1, \ldots, r$). Note that
\[L_n\ob{\frac{1}{n}J_n\otimes E_j}=\ob{nI_n\otimes D - J_n\otimes A}\ob{\frac{1}{n}J_n\otimes E_j}=n\lambda_j\ob{\frac{1}{n}J_n\otimes E_r},\]
where $j\in\{1,2,\ldots,r\}$. Similarly 
\[L_n\ob{\ob{I_n-\frac{1}{n}J_n}\otimes \e_l\e_l^T}=nd_l\ob{\ob{I_n-\frac{1}{n}J_n}\otimes \e_l\e_l^T},\]
where $l\in\{1,2,\ldots,m\}$, and $\e_l$ is the $l$-th standard unit basis vector in  $\Cl^m.$ We now write the spectral decomposition of the Laplacian matrix $L_n$ of the blow-up $\up{n}G$ as 
\[L_n=\sum_{j=1}^{r}n\lambda_j\ob{\frac{1}{n}J_n\otimes E_j}+\sum_{l=1}^{m}nd_l\ob{I_n-\frac{1}{n}J_n}\otimes \e_l\e_l^T.\] 
This yields the transition matrix of the blow-up relative to the Laplacian:
\[U_n(t)=\sum_{j=1}^{r}\exp{\ob{-in\lambda_jt}}\ob{\frac{1}{n}J_n\otimes E_j}+\sum_{l=1}^{m}\exp{\ob{-ind_lt}}\ob{I_n-\frac{1}{n}J_n}\otimes \e_l\e_l^T.\]
Accordingly, for a vertex $u$ in $G$
\begin{eqnarray}\label{he1}
\e^T_{(0,u)}U_n(t)\e_{(1,u)} = \frac{1}{n}\sum_{j=1}^{r}\tb{\exp{\ob{-in\lambda_jt}} - \exp{\ob{-ind_ut}}}(E_j)_{u,u}.
\end{eqnarray}
For more about the eigenvalues and eigenvectors of the Laplacian of a blow-up, see \cite{de2016clique}.

We also make an observation about the vertices in $\up{n}G$. A pair of vertices $u$ and $v$ in a graph $G$ are called \textit{twins} if $N\ob{u}\setminus\cb{v}=N\ob{v}\setminus\cb{u}$. Twins that are not adjacent are called \textit{false twins}. A \textit{twin set} is a set of vertices in a graph whose elements are pairwise twins. For a vertex $u$ in $G$, $T_u:=\cb{(j,u):j\in\Zl_n}$ is the set of all copies of vertex $u$ in $\up{n}G$. Notice that $T_u$ is a set of twins in $\up{n}G$. Thus, for all $n\geq 2$, each vertex in $\up{n}G$ is contained in a twin set of size at least $n$. In fact, the following holds \cite[Proposition 1]{bhattacharjya2024quantum}.

\begin{prop}
\label{twins}
Let $u$ and $v$ be two vertices in $G$. The set $T_u\cup T_v$ is a twin set in $\up{n}G$ if and only if $u$ and $v$ are false twins in $G.$
\end{prop}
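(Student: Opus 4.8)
The plan is to argue directly from the adjacency structure of $\up{n}G$, exploiting the fact that the neighborhood of a vertex depends only on which vertex of $G$ it is a copy of. First I would record the basic observation that for any vertex $(j,w)$ of $\up{n}G$,
\[
N\ob{(j,w)} = \Zl_n \times N(w),
\]
which is immediate from the definition of the blow-up and is \emph{independent} of the copy index $j$. This is exactly what makes each $T_w$ a twin set, and it reduces the problem to the cross pairs: since all pairs lying entirely within $T_u$ or entirely within $T_v$ are already twins, $T_u\cup T_v$ is a twin set if and only if $(j,u)$ and $(k,v)$ are twins for all $j,k\in\Zl_n$ (we may assume $u\neq v$).

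For the \emph{if} direction, I would assume $u$ and $v$ are false twins in $G$, so $u\not\sim v$ and $N(u)=N(v)$. Then each cross pair $(j,u),(k,v)$ is non-adjacent in $\up{n}G$, so neither vertex lies in the other's neighborhood, and the twin condition $N\ob{(j,u)}\setminus\cb{(k,v)}=N\ob{(k,v)}\setminus\cb{(j,u)}$ reduces to $N\ob{(j,u)}=N\ob{(k,v)}$, which holds because $\Zl_n\times N(u)=\Zl_n\times N(v)$. Hence $T_u\cup T_v$ is a twin set.

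For the \emph{only if} direction, I would assume $T_u\cup T_v$ is a twin set and deduce that $u,v$ are false twins. The step I expect to be the real obstacle is ruling out the possibility $u\sim v$: in $G$ alone, two adjacent vertices may well be (true) twins, so the crux is seeing why passing to the blow-up destroys this whenever $n\geq 2$. The key is the multiplicity of copies: if $u\sim v$, choose $m\neq k$ (possible since $n\geq 2$) and note that $(m,v)\in N\ob{(j,u)}$ because $v\in N(u)$, whereas $(m,v)\notin N\ob{(k,v)}$ because $v\notin N(v)$; since $(m,v)$ is distinct from both $(k,v)$ and $(j,u)$, this single witness shows $(j,u)$ and $(k,v)$ are not twins, a contradiction. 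Once $u\not\sim v$ is forced, the cross-pair twin condition collapses to $N\ob{(j,u)}=N\ob{(k,v)}$, i.e. $\Zl_n\times N(u)=\Zl_n\times N(v)$, giving $N(u)=N(v)$; together with non-adjacency this is precisely the statement that $u$ and $v$ are false twins in $G$.
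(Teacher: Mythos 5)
Your proof is correct and complete. The paper itself does not prove this proposition but imports it from Bhattacharjya et al.\ (cited as Proposition 1 there), so there is no in-paper argument to compare against; your direct computation of neighborhoods, $N\ob{(j,w)}=\Zl_n\times N(w)$, together with the multiplicity-of-copies witness $(m,v)$ with $m\neq k$ to rule out $u\sim v$, is exactly the natural self-contained argument and handles the only delicate point (that adjacency of $u$ and $v$ in $G$ destroys twinness of cross pairs in the blow-up) correctly.
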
 

\section{Periodicity}\label{sec:per}

Denote the set of distinct Laplacian eigenvalues of $G$ by $\sigma(G)$. The \textit{eigenvalue support} of vertex $v$ in $G$ is the set 
\begin{center}
$\sigma_v(G)=\cb{\lambda\in\eig{G}:E_{\lambda}\e_v\neq \o}.$
\end{center}
The following observation determines the eigenvalue support of a vertex in a blow-up.

\begin{prop}\label{pp1}
Let $n\geq 2$. Then $\sigma\left(\up{n}G\right)=n\cdot\sigma(G)\cup \{n\cdot d_u:u\in V(G)\}$. Moreover, for any vertex $u$ of $G$ and for any $j\in\Zl_n$, we have $\sigma_{(j,u)}\left(\up{n}G\right)=n\cdot\sigma_u(G)\cup\cb{n\cdot d_u}.$
\end{prop}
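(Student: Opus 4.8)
The plan is to read everything off the spectral decomposition of $L_n$ established in Section~\ref{sec:trans}. First I would record that the two families appearing there, namely $\frac{1}{n}J_n\otimes E_j$ for $j=1,\dots,r$ and $\ob{I_n-\frac{1}{n}J_n}\otimes\e_l\e_l^T$ for $l=1,\dots,m$, form a complete system of mutually orthogonal idempotents. Indeed, $\frac{1}{n}J_n$ and $I_n-\frac{1}{n}J_n$ are complementary orthogonal projections on $\Cl^n$, the $E_j$ are the orthogonal eigenprojections of the symmetric matrix $L$, and the $\e_l\e_l^T$ are orthogonal rank-one projections on $\Cl^m$; a short Kronecker computation then shows these $r+m$ matrices are idempotent, pairwise orthogonal, and sum to $I_{nm}$. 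Since each $E_j\neq\o$ and, because $n\geq 2$, each $I_n-\frac{1}{n}J_n\neq\o$, none of these idempotents vanishes.

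For the eigenvalue set, the distinct Laplacian eigenvalues of $\up{n}G$ are exactly the distinct scalars attached to the nonzero idempotents above, i.e. the members of $\cb{n\lambda_j:j}\cup\cb{nd_l:l}$. Because the $\lambda_j$ range over $\eig{G}$ and the $d_l$ over the degrees $d_u$, this is precisely $n\cdot\sigma(G)\cup\cb{n\cdot d_u:u\in V(G)}$, giving the first assertion; any coincidences among the $n\lambda_j$ and $nd_l$ are harmless since only equality of sets is claimed.

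For the support I would write $\e_{(j,u)}=\e_j\otimes\e_u$ in the lexicographic ordering and apply each idempotent. The first family gives $\ob{\frac{1}{n}J_n\otimes E_{j'}}\ob{\e_j\otimes\e_u}=\frac{1}{n}\l_n\otimes E_{j'}\e_u$, which is nonzero iff $E_{j'}\e_u\neq\o$, that is, iff $\lambda_{j'}\in\sigma_u(G)$; note this is independent of $j$. The second family gives $\ob{\ob{I_n-\frac{1}{n}J_n}\otimes\e_l\e_l^T}\ob{\e_j\otimes\e_u}=\ob{\e_j-\frac{1}{n}\l_n}\otimes\e_l\e_l^T\e_u$, and since $\e_j-\frac{1}{n}\l_n\neq\o$ for $n\geq 2$ while $\e_l\e_l^T\e_u=\delta_{lu}\e_l$, this is nonzero iff $l=u$, again independent of $j$.

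The only real subtlety is that the eigenprojection of $L_n$ for a repeated eigenvalue $\mu$ is the sum of all primitive idempotents carrying that eigenvalue, so I must rule out cancellation among their images on $\e_{(j,u)}$. This is immediate from mutual orthogonality: orthogonal idempotents have orthogonal ranges, so the image vectors computed above are pairwise orthogonal, and their sum vanishes iff each summand does. Hence $\mu\in\sigma_{(j,u)}\ob{\up{n}G}$ iff some contributing idempotent acts nontrivially on $\e_{(j,u)}$, that is, iff $\mu=n\lambda_{j'}$ with $\lambda_{j'}\in\sigma_u(G)$ or $\mu=nd_u$. This yields $\sigma_{(j,u)}\ob{\up{n}G}=n\cdot\sigma_u(G)\cup\cb{n\cdot d_u}$ for every $j\in\Zl_n$, completing the argument.
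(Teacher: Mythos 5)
Your proof is correct and follows exactly the route the paper intends: Proposition~\ref{pp1} is stated as an observation that reads off directly from the spectral decomposition of $L_n$ derived in Section~\ref{sec:trans}, which is precisely the decomposition you use. Your write-up simply fills in the details (completeness and orthogonality of the idempotents, the action on $\e_j\otimes\e_u$, and the no-cancellation point for coincident eigenvalues), all of which check out.
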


We say that vertex $u$ in $G$ is \textit{periodic} at time $\tau>0$ if $|U(\tau)_{u,u}|=1$. The minimum such $\tau>0$ is called the \textit{minimum period} of $u$, denoted by $\rho_u$. Since $0$ is an eigenvalue of $L$ having the all-ones vector as an associated eigenvector, $0\in\sigma_u(G)$. As $L$ is also positive semidefinite, \cite[Theorem 6.1]{god2} yields the following result.

\begin{thm}
\label{per}
Vertex $u$ in $G$ is periodic if and only if $\sigma_u(G)\subset\Zl$. In this case, $\rho_v=\frac{2\pi}{g}$, where $g=\operatorname{gcd}(\sigma_u(G)\backslash \{0\})$.
\end{thm}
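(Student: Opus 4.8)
The plan is to pass to the spectral decomposition $U(t)=\sum_{\lambda\in\sigma(G)}\exp(it\lambda)E_\lambda$ and convert periodicity into an arithmetic condition on the support $\sigma_u(G)$. First, by unitarity and the Cauchy--Schwarz inequality, $|U(\tau)_{u,u}|=\mb{\e_u^TU(\tau)\e_u}\le 1$ with equality exactly when $U(\tau)\e_u=\gamma\e_u$ for some $\gamma$ of modulus one, so periodicity at $\tau$ is precisely the statement $U(\tau)\e_u=\gamma\e_u$. Expanding, $U(\tau)\e_u=\sum_\lambda\exp(i\tau\lambda)E_\lambda\e_u$, and since the vectors $E_\lambda\e_u$ are mutually orthogonal and nonzero precisely for $\lambda\in\sigma_u(G)$, this equals $\gamma\e_u=\gamma\sum_\lambda E_\lambda\e_u$ if and only if $\exp(i\tau\lambda)=\gamma$ for every $\lambda\in\sigma_u(G)$. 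As noted above $0\in\sigma_u(G)$, so taking $\lambda=0$ forces $\gamma=1$. Hence \emph{$u$ is periodic at $\tau$ if and only if $\tau\lambda\in2\pi\Zl$ for all $\lambda\in\sigma_u(G)$.}

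The ``if'' direction is then immediate: assuming $\sigma_u(G)\subset\Zl$, set $g=\operatorname{gcd}(\sigma_u(G)\setminus\{0\})$ and $\tau=2\pi/g$; then $\tau\lambda=2\pi(\lambda/g)\in2\pi\Zl$ for each $\lambda\in\sigma_u(G)$, so $u$ is periodic.

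The ``only if'' direction is the crux, and is where positive semidefiniteness enters. Suppose $u$ is periodic at some $\tau>0$, and write $c=2\pi/\tau>0$. Every nonzero $\lambda\in\sigma_u(G)$ then satisfies $\lambda=c\,k_\lambda$ with $k_\lambda\in\Zl_{>0}$ (here $\lambda>0$ because $L$ is positive semidefinite), so the nonzero elements of $\sigma_u(G)$ are pairwise rational multiples of one another. I would then invoke two structural facts about the Laplacian spectrum. First, each $\lambda\in\sigma(G)$ is an algebraic integer, being a root of the monic integer polynomial $\det(xI-L)$. Second, $\sigma_u(G)$ is closed under Galois conjugation: the generating function $\e_u^T(xI-L)^{-1}\e_u=\sum_\lambda (E_\lambda)_{u,u}/(x-\lambda)$ is a ratio of integer polynomials whose reduced denominator is exactly $\prod_{\lambda\in\sigma_u(G)}(x-\lambda)$, so this product lies in $\Ql[x]$. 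Consequently its sum of roots $\sum_{\lambda\in\sigma_u(G)}\lambda=c\sum_\lambda k_\lambda$ is rational, and since $\sum_\lambda k_\lambda$ is a positive integer this forces $c\in\Ql$. Then every $\lambda=c\,k_\lambda$ is rational, and a rational algebraic integer lies in $\Zl$, whence $\sigma_u(G)\subset\Zl$.

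For the minimum period, with $\sigma_u(G)\subset\Zl$ the admissible times are exactly $\tau=2\pi s$ for which $s\lambda\in\Zl$ for all $\lambda\in\sigma_u(G)\setminus\{0\}$; a short Bézout argument shows this holds iff $s\in\frac1g\Zl$ with $g=\operatorname{gcd}(\sigma_u(G)\setminus\{0\})$, whose least positive element is $1/g$, giving $\rho_u=2\pi/g$. I expect the only-if direction to be the main obstacle: the bare spectral analysis yields only that the nonzero eigenvalues in $\sigma_u(G)$ are rationally commensurable, and upgrading this to genuine integrality is precisely where the algebraic-integer property, the Galois-stability of the support, and positive semidefiniteness must be combined. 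This is the content abstracted in \cite[Theorem 6.1]{god2}, and the argument above is the route I would take to specialize it to the Laplacian setting.
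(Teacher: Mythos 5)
Your proposal is correct, but it is considerably more self-contained than what the paper does: the paper's entire proof is the observation that $0\in\sigma_u(G)$ (via the all-ones eigenvector) and that $L$ is positive semidefinite, followed by a citation of \cite[Theorem 6.1]{god2}, which characterizes periodicity in general. You instead reprove the relevant special case from scratch, and every step checks out: the Cauchy--Schwarz reduction of $|U(\tau)_{u,u}|=1$ to $U(\tau)\e_u=\gamma\e_u$, the forcing of $\gamma=1$ from $0\in\sigma_u(G)$, the commensurability $\lambda=ck_\lambda$ with $k_\lambda\in\Zl_{>0}$, the Galois-closure of the support via the reduced denominator of $\e_u^T(xI-L)^{-1}\e_u$, and the trace argument that upgrades commensurability to $c\in\Ql$ and hence (by algebraic integrality) to $\sigma_u(G)\subset\Zl$. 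These are exactly the ingredients inside Godsil's proof, so this is the same mathematics written out rather than a genuinely new idea; what your version buys is transparency about where positive semidefiniteness is actually used, namely to guarantee $\sum_\lambda k_\lambda>0$ so that $c$ is rational --- without it the general theorem must allow quadratic-irrational supports, which your argument cleanly rules out here. Your Bezout computation of the minimum period also matches the paper's formula (note the statement's $\rho_v$ is a typo for $\rho_u$). The only pedantic caveat is that the gcd, and your trace step, require $\sigma_u(G)\setminus\{0\}\neq\emptyset$, which holds since $G$ is connected on at least two vertices.
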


The following is immediate from Theorem \ref{per}.

\begin{cor}\label{hc1}
$\up{n}G$ is periodic at vertex $(j,u)$ for some $n\geq 1$ if and only if $\sigma_u(G)\subset\Zl$. Moreover, if $\up{n}G$ is periodic at vertex $(j,u)$ for some $n\geq 1$, then $\up{n}G$ is periodic at vertex $(j,u)$ for all $n\geq 1$ and $j\in\Zl_n$ with $\rho_{(j,u)}=\frac{2\pi}{nh}$, where $h=\operatorname{gcd}(\sigma_u(G)\backslash \{0\}\cup \{d_v\})$.
\end{cor}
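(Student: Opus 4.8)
The plan is to apply the periodicity criterion of Theorem \ref{per} directly to the graph $\up{n}G$, using the eigenvalue support computed in Proposition \ref{pp1} to translate the condition back to $G$. By Theorem \ref{per}, $(j,u)$ is periodic in $\up{n}G$ if and only if $\sigma_{(j,u)}(\up{n}G)\subset\Zl$, and Proposition \ref{pp1} gives $\sigma_{(j,u)}(\up{n}G)=n\cdot\sigma_u(G)\cup\{n\cdot d_u\}$, independently of $j$. Thus periodicity at $(j,u)$ amounts to asking that $n\lambda\in\Zl$ for every $\lambda\in\sigma_u(G)$ and that $n d_u\in\Zl$.

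For the reverse direction of the biconditional, I would observe that if $\sigma_u(G)\subset\Zl$ then, since $d_u$ is automatically a nonnegative integer (as $G$ is simple and unweighted), the set $n\cdot\sigma_u(G)\cup\{n d_u\}$ lies in $\Zl$ for every $n\geq 1$ and every $j\in\Zl_n$. Theorem \ref{per} then yields periodicity of $(j,u)$ in $\up{n}G$ for all such $n$ and $j$ simultaneously, which supplies both the converse implication and the ``for all $n$'' assertion.

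The forward direction is the only step that is not purely formal, and it is where the key idea enters. Periodicity at $(j,u)$ for some $n\geq 1$ forces $n\lambda\in\Zl$ for each $\lambda\in\sigma_u(G)$, so every such $\lambda$ is rational. To upgrade rationality to integrality I would use that $L=D-A$ has integer entries, so its characteristic polynomial is monic with integer coefficients and each eigenvalue is an algebraic integer; a rational algebraic integer must be an integer, giving $\sigma_u(G)\subset\Zl$.

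It remains to compute the minimum period, which follows by applying the period formula $\rho_{(j,u)}=2\pi/\gcd\!\left(\sigma_{(j,u)}(\up{n}G)\setminus\{0\}\right)$ of Theorem \ref{per} to $\up{n}G$. Since $0\in\sigma_u(G)$ and, for $G$ connected with at least two vertices, $d_u\geq 1$, deleting $0$ gives $\sigma_{(j,u)}(\up{n}G)\setminus\{0\}=n\cdot\left[(\sigma_u(G)\setminus\{0\})\cup\{d_u\}\right]$. Factoring the common multiple $n$ out of the gcd then yields $n h$ with $h=\gcd(\sigma_u(G)\setminus\{0\}\cup\{d_u\})$, hence $\rho_{(j,u)}=2\pi/(nh)$; the degenerate case $G=K_1$ can be set aside separately.
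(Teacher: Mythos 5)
Your proof is correct and follows exactly the route the paper intends: the paper gives no explicit argument, declaring the corollary ``immediate from Theorem \ref{per}'', and your fleshed-out version (Theorem \ref{per} applied to $\up{n}G$ via the support formula of Proposition \ref{pp1}, with the rational-algebraic-integer observation handling the forward direction and the factoring of $n$ out of the gcd giving the period) is precisely the intended derivation. No gaps.
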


In \cite[Example 3]{bhattacharjya2024quantum}, adjacency periodicity of a vertex in a graph need not extend to a blow-graph. However, from Theorem \ref{per} and Corollary \ref{hc1}, it follows that blow-up graphs preserve Laplacian periodicity. This difference in behaviour between the adjacency and Laplacian dynamics has also been observed for join graphs \cite{kirkland2023quantum}.

A graph is \textit{periodic} if it is periodic at all vertices at the same time. The following result can be used to construct periodic graphs with small periods using the blow-up operation. Recall that a graph is \textit{Laplacian integral} if all eigenvalues of the corresponding Laplacian matrix are integers. 
\begin{cor}
$\up{n}G$ is periodic for some $n\geq 1$ if and only if $G$ is Laplacian integral. Moreover, if $\up{n}G$ is periodic for some $n\geq 1$, then $\up{n}G$ is periodic for all $n\geq 1$ with minimum period $\frac{2\pi}{nh}$, where $h=\operatorname{gcd}( \sigma(G)\backslash \{0\}\cup \{d_u:u\in V(G)\})$.
\end{cor}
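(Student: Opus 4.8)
The plan is to reduce graph periodicity to simultaneous periodicity at every vertex, and then read off both the characterization and the period from the spectral data already assembled in Proposition \ref{pp1} and Corollary \ref{hc1}. First I would record the elementary reduction that $\up{n}G$ is periodic if and only if there is a single $\tau>0$ with $U_n(\tau)=\gamma I$. Only the forward implication needs an argument: if every diagonal entry of the unitary matrix $U_n(\tau)$ has modulus one, then unitarity forces every off-diagonal entry in each row to vanish, so $U_n(\tau)$ is diagonal with unimodular diagonal; applying $U_n(\tau)$ to the all-ones vector, which lies in $\ker L_n$ and is therefore fixed by $U_n(\tau)$, pins every diagonal entry to $1$, giving $U_n(\tau)=I$ and hence $\gamma=1$.

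With this reduction in hand, the characterization follows directly from Corollary \ref{hc1}. If $\up{n}G$ is periodic, then in particular each vertex $(j,u)$ is periodic, so $\sigma_u(G)\subset\Zl$ for every $u\in V(G)$; since $\sigma(G)=\bigcup_{u\in V(G)}\sigma_u(G)$, this gives $\sigma(G)\subset\Zl$, i.e.\ $G$ is Laplacian integral. Conversely, if $G$ is Laplacian integral, then by Proposition \ref{pp1} every eigenvalue of $L_n$ has the form $n\lambda$ or $nd_u$ with $\lambda\in\Zl$ and $d_u\in\Zl$, hence is an integer, so $\tau\mu\in 2\pi\Zl$ for all eigenvalues $\mu$ at a suitable $\tau$ and $U_n(\tau)=I$. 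Because this criterion makes no reference to $n$, periodicity for one value of $n$ yields periodicity for all $n\geq 1$.

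For the minimum period I would invoke Proposition \ref{pp1} once more (for $n\geq 2$) to list the nonzero eigenvalues of $L_n$ as $\{n\lambda:\lambda\in\sigma(G)\setminus\{0\}\}\cup\{nd_u:u\in V(G)\}$, the eigenvalue $0$ arising only from $\lambda=0$. Since $U_n(\tau)=I$ exactly when $\tau\mu\in 2\pi\Zl$ for every nonzero eigenvalue $\mu$, the least positive $\tau$ with this property is $2\pi$ divided by the gcd of those eigenvalues; pulling the common factor $n$ out of the gcd yields $\frac{2\pi}{n\,\gcd(\sigma(G)\setminus\{0\}\cup\{d_u:u\in V(G)\})}=\frac{2\pi}{nh}$, as claimed.

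The step I expect to be most delicate is the logical backbone connecting the three notions in play, namely periodicity of the graph, periodicity at every vertex, and Laplacian integrality. Corollary \ref{hc1} already encodes the nontrivial arithmetic input, that a rational Laplacian eigenvalue is an algebraic integer and hence an integer, so that $n\cdot\sigma_u(G)\subset\Zl$ forces $\sigma_u(G)\subset\Zl$; the remaining care lies in establishing the common-time reduction cleanly and in the gcd bookkeeping. I would also flag the instance $n=1$ separately, since there $\up{1}G=G$, the eigenvalue list in Proposition \ref{pp1} degenerates to $\sigma(G)$, and the degree terms no longer enter the gcd.
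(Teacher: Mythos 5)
Your proposal is correct and follows essentially the route the paper intends: the corollary is stated without proof as an immediate consequence of Theorem \ref{per} and Corollary \ref{hc1}, obtained by applying the vertex-periodicity criterion to every vertex (using $\sigma(G)=\bigcup_{u}\sigma_u(G)$) and reading the minimum period off the gcd of the nonzero eigenvalues of $L_n$, exactly as you do; your explicit unitarity-plus-kernel argument reducing ``periodic at all vertices simultaneously'' to $U_n(\tau)=I$ is a careful filling-in of a step the paper leaves tacit. Your flag about $n=1$ is a genuine (minor) imprecision in the statement: since the projections $\left(I_n-\frac{1}{n}J_n\right)\otimes \e_u\e_u^T$ vanish when $n=1$, the degrees do not enter the spectrum of $\up{1}G=G$, and the formula $\frac{2\pi}{nh}$ is only guaranteed for $n\geq 2$ (e.g.\ for $G=K_2$ the true minimum period is $\pi$ while the formula gives $2\pi$).
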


\section{Strong cospectrality}\label{sec:sc}

Two vertices $u$ and $v$ in a graph $G$ are said to be \textit{strongly cospectral} if for every $\lambda\in\sigma_u(G)$,
\begin{equation*}
E_{\lambda}\textbf{e}_u=\pm E_{\lambda}\textbf{e}_v.
\end{equation*} 
If $u$ and $v$ are strongly cospectral, then we may write $\sigma_u(G)=\sigma_{uv}^+(G)\cup \sigma_{uv}^-(G)$, where
\begin{equation*}
\sigma_{uv}^+(G)=\{\lambda:E_{\lambda}\textbf{e}_u= E_{\lambda}\textbf{e}_v\neq \textbf{0}\}\quad \text{and}\quad \sigma_{uv}^-(G)=\{\lambda:E_{\lambda}\textbf{e}_u=- E_{\lambda}\textbf{e}_v\neq \textbf{0}\}.
\end{equation*} 
Strong cospectrality is a necessary condition for LPGST \cite[Lemma 13.1]{god1}. Thus, in order to characterize LPGST in blow-up graphs, we first need to characterize strong cospectrality.

\begin{thm}
\label{sc}
Let $G$ be a graph.
\begin{enumerate}
\item If $n\geq 3$, then $\up{n}G$ does not exhibit strong cospectrality.
\item Let $n=2$. If $u$ is a vertex in $G$ with degree $d_u$, then vertices $(0,u)$ and $(1,u)$ are strongly cospectral in $\up{2}G$ if and only if $d_u\notin\sigma_u(G)$, in which case
\begin{equation}
\label{+-}
\sigma_{(0,u),(1,u)}^+(G)=2\cdot \sigma_{u}(G)\quad \text{and}\quad \sigma_{(0,u),(1,u)}^-(G)=\{2d_u\}.
\end{equation}
Moreover, vertex $(0,u)$ can only be strongly cospectral with $(1,u)$ in $\up{2}G$.
\end{enumerate}
\end{thm}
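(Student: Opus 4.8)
The plan is to work directly with the spectral decomposition of $L_n$ derived in Section \ref{sec:trans} and the explicit eigenprojections it furnishes. From that decomposition, the eigenprojections of $L_2$ split into two types: those of the form $\tfrac{1}{2}J_2\otimes E_j$ associated with eigenvalue $2\lambda_j$, and those built from $(I_2-\tfrac{1}{2}J_2)\otimes \e_l\e_l^T$ associated with eigenvalue $2d_l$. The first key observation is that $\tfrac{1}{2}J_2$ acts identically on $\e_0$ and $\e_1$ (the two copies), so any projection of the first type satisfies $F\,\e_{(0,u)}=F\,\e_{(1,u)}$, contributing the "$+$" part. By contrast, $I_2-\tfrac{1}{2}J_2$ sends $\e_0\mapsto\tfrac12(\e_0-\e_1)$ and $\e_1\mapsto-\tfrac12(\e_0-\e_1)$, so any projection of the second type satisfies $F\,\e_{(0,u)}=-F\,\e_{(1,u)}$, contributing the "$-$" part. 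This dichotomy is exactly the structure needed for strong cospectrality.

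First I would compute, for each eigenvalue of $L_2$, the vectors $E_\lambda\,\e_{(0,u)}$ and $E_\lambda\,\e_{(1,u)}$ explicitly. For the first-type projection at $2\lambda_j$, one gets $(\tfrac12 J_2\otimes E_j)\e_{(0,u)}=\tfrac12(\e_0+\e_1)\otimes E_j\e_u$, which is nonzero precisely when $\lambda_j\in\sigma_u(G)$, and it equals the corresponding vector at $(1,u)$; hence every such $2\lambda_j$ lands in $\sigma^+_{(0,u),(1,u)}$, giving $2\cdot\sigma_u(G)$. For the second-type projection at $2d_u$ (only the term $l=u$ survives when acting on a copy of $u$), one gets $\pm\tfrac12(\e_0-\e_1)\otimes\e_u$, which is always nonzero and satisfies the "$-$" relation; hence $2d_u\in\sigma^-_{(0,u),(1,u)}$. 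The projections at $2d_l$ for $l\ne u$ annihilate $\e_{(j,u)}$. This immediately gives the candidate decomposition in (\ref{+-}).

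The subtle point, and the place where the hypothesis $d_u\notin\sigma_u(G)$ enters, is that strong cospectrality requires the condition $E_\lambda\e_{(0,u)}=\pm E_\lambda\e_{(1,u)}$ to hold on a single \emph{genuine} eigenprojection for each eigenvalue $\lambda$ in the support. If $d_u\in\sigma_u(G)$, then $2d_u$ arises both as $2\lambda_j$ (first type, "$+$") and as the degree eigenvalue (second type, "$-$"); the true eigenprojection of $L_2$ at $2d_u$ is the sum of these two commuting pieces, and on that combined projection $E_{2d_u}\e_{(0,u)}$ is neither $+E_{2d_u}\e_{(1,u)}$ nor $-E_{2d_u}\e_{(1,u)}$ (the $+$ and $-$ contributions are both nonzero and do not align), so strong cospectrality fails. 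Conversely, when $d_u\notin\sigma_u(G)$, the eigenvalues $2\lambda_j$ and $2d_u$ are all distinct, each eigenprojection is purely of one type, and the sign pattern above holds throughout $\sigma_{(0,u)}(\up{2}G)$ (using Proposition \ref{pp1} to confirm the support is exactly $2\cdot\sigma_u(G)\cup\{2d_u\}$). I expect this collision analysis at the shared eigenvalue $2d_u$ to be the main obstacle, since it is precisely the mechanism that distinguishes the Laplacian case from the adjacency case.

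For the final assertion that $(0,u)$ can be strongly cospectral only with $(1,u)$, I would argue that strong cospectrality forces matching eigenvalue supports, so any partner $(k,w)$ must satisfy $\sigma_{(k,w)}(\up{2}G)=\sigma_{(0,u)}(\up{2}G)$; by Proposition \ref{pp1} this pins down $2d_w=2d_u$ and $2\cdot\sigma_w(G)=2\cdot\sigma_u(G)$. Then, examining the second-type projection at $2d_u$, the vector $E_{2d_u}\e_{(0,u)}=\pm\tfrac12(\e_0-\e_1)\otimes\e_u$ is supported only on the two copies of $u$, so the strong cospectrality relation $E_{2d_u}\e_{(0,u)}=\pm E_{2d_u}\e_{(k,w)}$ can hold only if $w=u$; and since the copies within $T_u$ are just the two vertices $(0,u),(1,u)$ when $n=2$, the partner must be $(1,u)$.
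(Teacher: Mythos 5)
Your treatment of part 2 is correct and is essentially the paper's own argument: the paper likewise reads off the two families of idempotents $\tfrac12 J_2\otimes E_j$ and $\bigl(I_2-\tfrac12 J_2\bigr)\otimes \e_u\e_u^T$ from the spectral decomposition of $L_2$, observes the $+/-$ dichotomy, and locates the failure of strong cospectrality exactly in the collision at $2d_u$ when $d_u\in\sigma_u(G)$, where the genuine eigenprojection is the sum of a ``$+$'' piece and a ``$-$'' piece that are both nonzero and mutually orthogonal. Your direct argument for the final clause (that $(0,u)$ can only be paired with $(1,u)$) replaces the paper's citation of an external twin-vertex result; it is sound provided you phrase it via the symmetric/antisymmetric split: the equality $E_{2d_u}\e_{(0,u)}=\pm E_{2d_u}\e_{(k,w)}$, whose left side lies entirely in the antisymmetric subspace $(\e_0-\e_1)\otimes\Cl^m$, forces the symmetric component $\tfrac12(\e_0+\e_1)\otimes E_j\e_w$ of the right side to vanish and then forces $\e_w=\pm\e_u$, i.e.\ $w=u$. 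One small imprecision: equality of the supports $2\cdot\sigma_w(G)\cup\{2d_w\}=2\cdot\sigma_u(G)\cup\{2d_u\}$ does not by itself ``pin down'' $d_w=d_u$ and $\sigma_w(G)=\sigma_u(G)$ separately, since the unions could overlap differently; but nothing downstream in your argument actually uses that claim.

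The genuine gap is part 1: you never address the assertion that $\up{n}G$ has no strongly cospectral pair when $n\geq 3$, which is half the theorem. The paper disposes of it in one line by noting that each $T_u$ is a twin set of size $n\geq 3$ and invoking the known fact that a vertex lying in a twin set of size at least three cannot be strongly cospectral with any vertex. If you prefer to stay inside your spectral framework, the argument must still be made explicitly: at the eigenvalue $nd_u$ the antisymmetric idempotent contributes $\bigl(\e_j-\tfrac1n\mathbf{1}\bigr)\otimes\e_u$ to $E_{nd_u}\e_{(j,u)}$, and for $n\geq 3$ the vectors $\e_j-\tfrac1n\mathbf{1}$ and $\e_k-\tfrac1n\mathbf{1}$ with $j\neq k$ are not scalar multiples of one another (compare a coordinate outside $\{j,k\}$ with the $j$-th coordinate), so no two copies of $u$ are strongly cospectral; a further support argument of the kind you used for the ``moreover'' clause is then needed to rule out pairs $(j,u)$, $(k,w)$ with $w\neq u$. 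As written, the proposal proves only the $n=2$ statement.
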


\begin{proof}
We prove 1. For each $u\in V(G)$, let $T_u=\{(j,u):j\in\Zl_n\}$, which is a twin set in $\up{n}G$. If $n\geq 3$, then $|T_u|\geq 3$, and so \cite[Corollary 3.10]{mon1} yields the desired conclusion. To prove 2, let $n=2$. Note that if $\lambda_j\in\sigma_u(G)$, then the idempotent $\frac{1}{2}J_2\otimes E_j$ associated with each $2\lambda_j\in\sigma_u(\up{2}G)$ satisfies $(\frac{1}{2}J_2\otimes E_j)(\textbf{e}_0\otimes \textbf{e}_u)=(\frac{1}{2}J_2\otimes E_j)(\textbf{e}_1\otimes \textbf{e}_u)$. Now, if $d_u\notin \sigma_u(G)$, then the idempotent associated with $2d_u\in \sigma_u(\up{2}G)$ is given by $\ob{I_2-\frac{1}{2}J_2}\otimes \e_u\e_u^T$, and so we get $\left(\ob{I_2-\frac{1}{2}J_2}\otimes \e_u\e_u^T\right)(\textbf{e}_0\otimes \textbf{e}_u)=-\left(\ob{I_2-\frac{1}{2}J_2}\otimes \e_u\e_u^T\right)(\textbf{e}_1\otimes \textbf{e}_u)$. In other words, $(0,u)$ and $(1,u)$ are strongly cospectral in $\up{2}G$ and (\ref{+-}) holds. On the other hand, if $\lambda_j=d_u\in \sigma_u(G)$ for some $j$, then the idempotent associated with $2d_u\in \sigma_u(\up{2}G)$ is given by $E_{2d_u}=\frac{1}{2}J_2\otimes E_j+\left(\ob{I_2-\frac{1}{2}J_2}\otimes \e_u\e_u^T\right)$, and so $E_{2d_u}(\textbf{e}_0\otimes \textbf{e}_u)\neq \pm E_{2d_u}(\textbf{e}_1\otimes \textbf{e}_u)$. This proves the first statement of (2). The latter follows from \cite[Theorem 3.9(2)]{mon1}.
\end{proof}

If $n\geq 3$ then the absolute value of the expression on the right of \eqref{he1} is bounded above by $\frac{2}{3}.$ Hence, there is no LPGST in the blow-up $\up{n}G$ whenever $n\geq 3$, a fact that coincides with Theorem \ref{sc}(1). Thus, we consider the case $n=2$ for the discussion on LPGST.

We now apply Theorem \ref{sc}(2) to regular graphs. As usual, we denote the complete graph on $n$ vertices, the $n$-path, the $n$-cycle, and the $d$-cube, by $K_n, P_n C_n$ and $Q_d$, respectively. 

\begin{cor}  
\label{reg}
Let $G$ be a $d$-regular graph with vertex $u$. Vertices $(0,u)$ and $(1,u)$ are strongly cospectral in $\up{2}G$ if and only if $d\notin\sigma_u(G)$. In particular, if $u$ is any vertex, then vertices $(0,u)$ and $(1,u)$ are strongly cospectral in: (i)
$\up{2}K_{d+1}$ for all $d\geq 1$, (ii) $\up{2}C_n$ if and only if $n\not\equiv 0$ (mod 4), and (iii) $\up{2}Q_d$ if and only if $d$ is odd.
\end{cor}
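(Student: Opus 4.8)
The plan is to apply Corollary~\ref{reg}'s general criterion --- that $(0,u)$ and $(1,u)$ are strongly cospectral in $\up{2}G$ iff $d\notin\sigma_u(G)$ --- to each of the three specific families, using known Laplacian spectra. Since the three named graphs ($K_{d+1}$, $C_n$, $Q_d$) are all vertex-transitive, every vertex has the same eigenvalue support, so I may fix an arbitrary vertex $u$ and it suffices in each case to check whether the degree $d$ lies in the (full) Laplacian spectrum $\sigma(G)$; for a $d$-regular vertex-transitive graph $\sigma_u(G)=\sigma(G)$, so the per-vertex condition collapses to a statement about the global spectrum. The first step in each case is therefore to record the degree and the distinct Laplacian eigenvalues.

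\textbf{Case (i), $K_{d+1}$.} The complete graph $K_{d+1}$ is $d$-regular with Laplacian eigenvalues $0$ (once) and $d+1$ (with multiplicity $d$). Hence $\sigma(K_{d+1})=\{0,d+1\}$, and the degree $d$ is never an element of this set for any $d\geq 1$ (since $d\neq 0$ and $d\neq d+1$). By Corollary~\ref{reg} the strong cospectrality holds for all $d\geq 1$, giving (i) immediately.

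\textbf{Case (ii), $C_n$.} The cycle $C_n$ is $2$-regular with Laplacian eigenvalues $2-2\cos(2\pi k/n)=4\sin^2(\pi k/n)$ for $k=0,1,\dots,n-1$. I need to decide when the degree $d=2$ appears among these, i.e.\ when $4\sin^2(\pi k/n)=2$, equivalently $\sin^2(\pi k/n)=\tfrac12$, i.e.\ $\cos(2\pi k/n)=0$. This holds precisely when $2\pi k/n$ is an odd multiple of $\pi/2$, that is $4k\equiv n \pmod{2n}$, which has a solution $k\in\{0,\dots,n-1\}$ exactly when $4\mid n$. Thus $2\in\sigma(C_n)$ iff $n\equiv 0\pmod 4$, so by Corollary~\ref{reg} strong cospectrality holds iff $n\not\equiv 0\pmod 4$, proving (ii).

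\textbf{Case (iii), $Q_d$.} The hypercube $Q_d$ is $d$-regular with Laplacian eigenvalues $2k$ for $k=0,1,\dots,d$ (the eigenvalue $2k$ arising with multiplicity $\binom{d}{k}$), since $Q_d$ is the Cartesian product of $d$ copies of $K_2$ and Laplacian spectra add over Cartesian products. The degree $d$ lies in $\sigma(Q_d)=\{0,2,4,\dots,2d\}$ iff $d$ is even; hence $d\notin\sigma(Q_d)$ iff $d$ is odd, and Corollary~\ref{reg} yields strong cospectrality iff $d$ is odd, proving (iii). \emph{The main obstacle}, such as it is, is purely the trigonometric bookkeeping in case (ii): correctly translating $\cos(2\pi k/n)=0$ into the divisibility condition $4\mid n$ and checking that a valid index $k$ in the range $\{0,\dots,n-1\}$ exists exactly in that case; the other two cases are direct spectrum lookups once the regularity and vertex-transitivity reductions are in place.
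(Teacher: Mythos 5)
Your proof is correct and follows essentially the same route as the paper: the first statement is just Theorem \ref{sc}(2) specialized to a $d$-regular graph (where $d_u=d$ for every vertex), and parts (i)--(iii) are then settled by checking whether $d$ lies in the known Laplacian spectrum of each family, using that $K_{d+1}$, $C_n$ and $Q_d$ are vertex-transitive so that $\sigma_u(G)=\sigma(G)$. The only difference is that you spell out the vertex-transitivity justification and the trigonometric bookkeeping for $C_n$ (showing $2\in\sigma(C_n)$ iff $4\mid n$), which the paper leaves implicit.
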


\begin{proof}
The first statement follows from Theorem \ref{sc}(2). For $G\in\{K_{d+1}, C_n,Q_d\}$, we have $\sigma_u(G)=\sigma(G)$ for any vertex $u$ of $G$. Since $\sigma(K_{d+1})=\{0,d+1\}$,  $\sigma(C_{m})=\{2(1-\cos(\frac{2j\pi}{n})):0\leq j\leq \lfloor\frac{n}{2}\rfloor\}$, and $\sigma(Q_d)=\{2j:0\leq j\leq d\}$, the second statement follows from the first and the fact that $K_{d+1}$ and $Q_d$ are $d$-regular and $C_n$ is 2-regular.   
\end{proof}

\section{Pretty good state transfer}\label{sec:pgst}

We now characterize Laplacian pretty good state transfer (LPGST) in blow-up graphs. Since strong cospectrality is required for LPGST \cite[Lemma 13.1]{god1}, by virtue of Theorem \ref{sc}(2), it suffices to consider vertices $(0,u)$ and $(1,u)$ in $\up{n}G$ where $n=2$ and $d_u\notin\sigma_u(G)$. Moreover, since $0$ is an eigenvalue of the Laplacian matrix of a graph with the all-ones vector as an associated eigenvector, it follows that $0$ is in the eigenvalue support of every vertex in a graph. From the spectral decomposition of $L$, it follows that the phase factor for Laplacian periodicity and LPGST is always $\gamma=1$.

\begin{thm}\label{th1} 
Let $u$ be a vertex in $G$ with degree $d_u$ such that $d_u\notin\sigma_u(G)$. The following are equivalent.
\begin{enumerate}
    \item $\up{2}G$ exhibits LPGST between $(0,u)$ and $(1,u)$ with phase factor $\gamma=1.$
    \item There exists a sequence $\{\tau_k\}$ such that $\lim\limits_{k\to\infty}\exp{\left(-2id_u \tau_k\right)}=-1$ and \[\lim\limits_{k\to\infty}\exp{\left(-2i\lambda \tau_k\right)}= 1 \text{ for each } \lambda\in\sigma_u(G).\] 
 \item $G$ is almost periodic at $u$ relative to the sequence $\{\tau_k\}$ with phase factor $\gamma=1$ and 
 \[\lim\limits_{k\to\infty}\exp{\left(-2id_u \tau_k\right)}=-1.\]
 \item If $m_j$ are integers such that $\hspace{-0.1in}\displaystyle\sum_{\lambda_j\in\sigma_u(G)}\hspace{-0.1in}m_j(\lambda_j-d_u)=0$, then $\hspace{-0.1in} \displaystyle\sum_{\lambda_j\in\sigma_u(G)}\hspace{-0.1in} m_j$ is even.
\end{enumerate}
\end{thm}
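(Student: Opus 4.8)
The plan is to prove the stated equivalences by separating the analytic content (an approximation statement about phases) from the number-theoretic content of condition~4, with Kronecker's theorem serving as the bridge between them. Throughout I assume $d_u\notin\sigma_u(G)$, so that $(0,u)$ and $(1,u)$ are strongly cospectral by Theorem~\ref{sc}(2); note also that $d_u\neq0$ here, since $0\in\sigma_u(G)$ always while $d_u\notin\sigma_u(G)$.

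First I would establish $1\Leftrightarrow2$ by a direct computation with the transition matrix of Section~\ref{sec:trans}. Writing $\e_{(0,u)}=\e_0\otimes\e_u$ and applying the spectral form of $U_2(t)$, one obtains
\[
U_2(t)\,\e_{(0,u)}=\tfrac12(\e_0+\e_1)\otimes w(t)+\tfrac12\exp\ob{-2id_ut}(\e_0-\e_1)\otimes\e_u,
\]
where $w(t)=\sum_{\lambda_j\in\sigma_u(G)}\exp\ob{-2i\lambda_jt}E_j\e_u$. Reading off the $\e_0$- and $\e_1$-components, LPGST to $(1,u)$ with $\gamma=1$ means the $\e_0$-part tends to $\o$ and the $\e_1$-part tends to $\e_u$; adding and subtracting these two requirements shows they are jointly equivalent to $w(\tau_k)\to\e_u$ together with $\exp\ob{-2id_u\tau_k}\to-1$. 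Since the vectors $\cb{E_j\e_u:\lambda_j\in\sigma_u(G)}$ are nonzero and mutually orthogonal, $w(\tau_k)\to\e_u=\sum_jE_j\e_u$ holds if and only if each phase $\exp\ob{-2i\lambda_j\tau_k}\to1$, which is exactly condition~2. Working at the vector level rather than with the single-entry formula~(\ref{he1}) is what guarantees that no probability leaks to vertices outside $T_u$ in the limit. The step $2\Leftrightarrow3$ is then a restatement: by the same orthogonality argument, almost periodicity of $u$ in $G$ with phase factor $1$ along $\cb{\tau_k}$ is precisely $\exp\ob{-2i\lambda_j\tau_k}\to1$ for all $\lambda_j\in\sigma_u(G)$, and the extra hypothesis $\exp\ob{-2id_u\tau_k}\to-1$ is carried verbatim.

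The heart of the argument is $2\Leftrightarrow4$, which I would deduce from Kronecker's theorem on simultaneous Diophantine approximation in its inhomogeneous one-parameter form. Condition~2 asks for a real sequence $\tau_k$ driving $\frac{\lambda_j}{\pi}\tau_k\to0\pmod1$ for every $\lambda_j\in\sigma_u(G)$ and $\frac{d_u}{\pi}\tau_k\to\frac12\pmod1$. Kronecker's criterion says such a sequence exists if and only if, for all integers $m_0$ and $\cb{m_j}$ satisfying the dependency $m_0d_u+\sum_{\lambda_j\in\sigma_u(G)}m_j\lambda_j=0$, the associated target combination $\frac{m_0}{2}$ lies in $\Zl$, i.e.\ $m_0$ is even. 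It then remains to check that this criterion is equivalent to condition~4. The forward direction takes any relation $\sum_jm_j(\lambda_j-d_u)=0$ and sets $m_0=-\sum_jm_j$, producing a Kronecker dependency whose evenness conclusion is exactly that of condition~4. For the converse, given a dependency $m_0d_u+\sum_jm_j\lambda_j=0$ I would exploit that $0\in\sigma_u(G)$, so the coefficient attached to the zero eigenvalue is unconstrained: adjusting only that one coefficient yields integers $m_j'$ with $\sum_jm_j'(\lambda_j-d_u)=0$ and $\sum_jm_j'=-m_0$, whence condition~4 forces $m_0$ even. Since $d_u\neq0$, this construction is valid.

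I expect the main obstacle to be precisely this last translation between the two forms of the linear-dependency relation: matching the coefficient $m_0$ attached to $d_u$ in the Kronecker setup against the sum $\sum_jm_j$ appearing in condition~4, and correctly using the always-present zero eigenvalue to move between them. Some care is also needed to invoke Kronecker's theorem in the right (inhomogeneous, continuous-parameter) version and to verify that the phase limits $1$ and $-1$ correspond to the targets $0$ and $\tfrac12$, after which the remaining computations are routine.
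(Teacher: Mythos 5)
Your proposal is correct and follows essentially the same route as the paper, which omits the details by citing the adjacency analogue \cite[Theorem 3]{bhattacharjya2024quantum} and \cite[Theorem 13]{Kirk2}: the vector-level spectral computation of $U_2(t)\e_{(0,u)}$ giving the symmetric/antisymmetric split, followed by Kronecker's theorem to translate the phase conditions into the integer-relation condition 4, is exactly the content of those cited results. Your handling of the translation between the Kronecker dependency $m_0d_u+\sum_jm_j\lambda_j=0$ and condition 4 via the unconstrained coefficient of the eigenvalue $0\in\sigma_u(G)$ (using $d_u\neq 0$) is the right way to close the loop.
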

 
The proof of Theorem \ref{th1} is similar to that of \cite[Theorem 3]{bhattacharjya2024quantum}, and so we omit it. We note however that the equivalence of 1 and 4 above  follows from \cite[Theorem 13]{Kirk2}.

Theorem \ref{th1} now yields a characterization of LPST in blow-up graphs. For a non-zero integer $n$, the \textit{2-adic norm} of $n$, denoted $\nu_2(n)$, is the exponent of the largest power of $2$ that divides $n.$ Further, we adopt the usual convention  $\nu_2(0) = \infty$. In particular, $\nu_2(0)>\nu_2(n)$ for any nonzero integer $n.$ 

\begin{thm}\label{th2}
Let $u$ be a vertex in $G$ with degree $d_u$ such that $d_u\notin\sigma_u(G)$. The following are equivalent.
\begin{enumerate}
    \item There exists $\tau>0$ such that  $\up{2}G$ exhibits LPST at time $\tau$ between $(0,u)$ and $(1,u)$ with phase factor $\gamma=1.$
    \item There exists $\tau>0$ such that $-\exp{\left(-2id_u \tau\right)}=\exp{\left(-2i\lambda \tau\right)}= 1$
    for each  $\lambda\in\sigma_u(G)$.
 \item There exists $\tau>0$ such that $G$ is periodic at $u$ at time $\tau$ with phase factor $\gamma=1$ and $\exp{\left(-2id_u \tau\right)}= -1.$
 \item $\sigma_u(G)\subset\Zl$ and $\nu_2(d_u) < \nu_2(\lambda)$ for all $\lambda\in\sigma_u(G)\backslash\{0\}$.
\end{enumerate}
Moreover, when one of statements 1-4 holds, the minimum LPST time is $\tau = \frac{\pi}{2h}$, where $h=\operatorname{gcd}(\sigma_u(G)\backslash \{0\}\cup \{d_v\})$.
\end{thm}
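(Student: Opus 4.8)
The plan is to establish the chain of equivalences $1 \Leftrightarrow 2 \Leftrightarrow 3 \Leftrightarrow 4$ and then extract the minimum LPST time. The starting point is the transition-matrix formula \eqref{he1}, which tells us that LPST between $(0,u)$ and $(1,u)$ at time $\tau$ (with phase factor $\gamma$) means
\[
\frac{1}{2}\sum_{\lambda_j\in\sigma_u(G)}\left[\exp\ob{-2i\lambda_j\tau}-\exp\ob{-2id_u\tau}\right](E_j)_{u,u}=\gamma,
\]
together with the requirement that the $(0,u),(0,u)$ diagonal entry of $U_2(\tau)$ vanishes (so that the state leaves $u$ entirely). First I would observe, using the spectral-decomposition idempotents $\frac{1}{2}J_2\otimes E_j$ and $(I_2-\frac{1}{2}J_2)\otimes\e_u\e_u^T$ from Section \ref{sec:trans}, that since $(0,u)$ and $(1,u)$ are strongly cospectral with $\sigma^+=2\cdot\sigma_u(G)$ and $\sigma^-=\{2d_u\}$ by Theorem \ref{sc}(2), the LPST condition with phase $\gamma=1$ forces each eigenvalue in $\sigma^+$ to contribute phase $+1$ and the single eigenvalue in $\sigma^-$ to contribute phase $-1$. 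This is exactly statement 2. The rigorous way to see this is the standard argument (e.g. \cite[Lemma 13.1]{god1} combined with the $\pm$ partition): because $\sum_j (E_j)_{u,u}+(\e_u\e_u^T)_{u,u}$ accounts for all the spectral mass at $u$ and the moduli must sum appropriately for a unit-modulus outcome, equality in the triangle inequality pins down every phase. So the equivalence $1\Leftrightarrow 2$ is essentially a triangle-inequality/equality-of-phases argument and should be routine.

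For $2\Leftrightarrow 3$, I would note that the condition $\exp(-2i\lambda\tau)=1$ for all $\lambda\in\sigma_u(G)$ is, by Theorem \ref{per} applied at the scale of the blow-up (or directly), precisely the statement that $G$ is periodic at $u$ at time $2\tau$ — but more carefully, it says $U(\tau')_{u,u}$ has modulus one with phase $1$ where the relevant period matches; the cleanest route is to recognize $\exp(-2i\lambda\tau)=1$ for all $\lambda\in\sigma_u(G)$ as periodicity of $u$ in $G$ (with the understood phase factor $\gamma=1$, which the excerpt already records is always the phase for Laplacian periodicity). Adjoining the separate condition $\exp(-2id_u\tau)=-1$ gives statement 3 verbatim, so this equivalence is a direct translation.

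The substantive step is $2\Leftrightarrow 4$ (equivalently $3\Leftrightarrow 4$), which is the number-theoretic heart. Here I would reason as follows. The existence of a single $\tau>0$ with $\exp(-2i\lambda\tau)=1$ for all $\lambda\in\sigma_u(G)$ forces $2\lambda\tau/2\pi=\lambda\tau/\pi\in\Zl$ for each such $\lambda$; taking ratios of two such conditions shows all eigenvalues in the support are rational multiples of one another, and combined with $0\in\sigma_u(G)$ and the reality of the spectrum this yields $\sigma_u(G)\subset\Zl$ after rescaling $\tau$ — i.e., integrality, which is the first half of statement 4. Writing $\tau=\frac{\pi p}{q}$ in lowest terms, the condition $\exp(-2i\lambda\tau)=1$ becomes $q\mid \lambda p$, hence $q\mid\lambda$, while $\exp(-2id_u\tau)=-1$ becomes $\lambda p/q\in 2\Zl+1$, i.e. $d_u p/q$ is an odd integer. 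Analyzing these divisibility constraints through the $2$-adic valuation $\nu_2$ is exactly what produces the condition $\nu_2(d_u)<\nu_2(\lambda)$ for every nonzero $\lambda\in\sigma_u(G)$: the nonzero eigenvalues must each be even "one step more" than $d_u$ so that $\lambda\tau$ lands on an even integer while $d_u\tau$ lands on an odd integer. The hard part will be carrying out this $2$-adic bookkeeping cleanly — in particular verifying that the condition $\nu_2(d_u)<\nu_2(\lambda)$ for all nonzero $\lambda$ in the support is not only necessary but sufficient to construct a valid $\tau$, and confirming the convention $\nu_2(0)=\infty$ makes the $\lambda=0$ case vacuous. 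Once the valuations are understood, the minimum LPST time drops out: with $h=\gcd(\sigma_u(G)\setminus\{0\}\cup\{d_u\})$ I would take $\tau=\frac{\pi}{2h}$ and check that $\exp(-2i\lambda\tau)=\exp(-i\pi\lambda/h)=1$ (since $h\mid\lambda$ and $\lambda/h$ is even by the valuation hypothesis) while $\exp(-2id_u\tau)=\exp(-i\pi d_u/h)=-1$ (since $d_u/h$ is odd), and that no smaller positive $\tau$ works because any valid $\tau$ must satisfy $\frac{\pi}{h}\mid\tau$ in the appropriate sense forced by the integrality and parity constraints.
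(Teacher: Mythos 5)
The paper itself does not write out a proof of Theorem \ref{th2}: it defers the equivalence $1\Leftrightarrow 4$ to an external result (\cite[Theorem 10]{Kirk2}) and obtains the minimum time from Corollary \ref{hc1} together with the standard fact that the minimum LPST time is half the minimum period. Your self-contained route --- reduce statement 1 to the phase conditions in statement 2 via the triangle inequality applied to \eqref{he1} (using $\sum_j(E_j)_{u,u}=1$ and the $\pm$ partition from Theorem \ref{sc}(2)), then do the $2$-adic bookkeeping for $2\Leftrightarrow 4$ --- is sound in outline and is essentially the argument hiding behind those citations; your construction $\tau=\frac{\pi}{2h}$ for $4\Rightarrow 2$ is correct. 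Two steps need tightening. For integrality in $2\Rightarrow 4$, ``taking ratios of eigenvalues'' together with $0\in\sigma_u(G)$ does not yield $\sigma_u(G)\subset\Zl$; the correct anchor is the integer $d_u$: from $\lambda\tau\in\pi\Zl$ and $d_u\tau\in\frac{\pi}{2}(2\Zl+1)$ one gets $\lambda/d_u\in\Ql$, hence $\lambda\in\Ql$, hence $\lambda\in\Zl$ because $\lambda$ is an algebraic integer. For minimality, ``$\frac{\pi}{h}\mid\tau$'' is the wrong divisibility (it would give $\frac{\pi}{h}$, not $\frac{\pi}{2h}$); the constraints actually force the admissible times to be exactly the odd multiples of $\frac{\pi}{2h}$, or one can argue as the paper does via Corollary \ref{hc1} that the minimum LPST time is half the minimum period $\frac{\pi}{h}$ of $(0,u)$ in $\up{2}G$.

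The one genuine gap is your treatment of $2\Leftrightarrow 3$. You correctly observe that $\exp(-2i\lambda\tau)=1$ for all $\lambda\in\sigma_u(G)$ is periodicity of $u$ in $G$ at time $2\tau$, not $\tau$, and then declare the equivalence ``a direct translation'' anyway. It is not: read literally, statement 3 demands $\lambda\tau\in 2\pi\Zl$ for all $\lambda\in\sigma_u(G)$ and simultaneously $2d_u\tau\in\pi(2\Zl+1)$ at the \emph{same} $\tau$, and together these force $\nu_2(\lambda)\geq\nu_2(d_u)+2$, which is strictly stronger than statement 4. Concretely, for $G=K_2$ (so $\up{2}G=C_4$, $\sigma_u(G)=\{0,2\}$, $d_u=1$), statements 1, 2 and 4 hold at $\tau=\pi/2$, yet no $\tau>0$ satisfies statement 3 as written, since periodicity of $u$ in $K_2$ with phase $1$ forces $\tau\in\pi\Zl$ and then $\exp(-2id_u\tau)=1\neq -1$. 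Either statement 3 should read ``periodic at $u$ at time $2\tau$'' (in which case your translation becomes genuinely verbatim), or your proof must flag and repair this; as written it silently passes over an inconsistency in the statement being proved.
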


We comment that the equivalence of statements 1 and 4 above follows from \cite[Theorem 10]{Kirk2}, while the minimum LPST time follows from Corollary \ref{hc1} and the fact that the minimum LPST time is half the minimum period \cite{god1}.

Our next result is immediate from Theorem \ref{th2}(4).

\begin{cor}
\label{d}
Let $G$ be a graph with vertex $u$ such that $d_u$ is odd. LPST occurs in $\up{2}G$ between vertices $(0,u)$ and $(1,u)$ if and only if $\sigma_u(G)$ consists of even integers.
\end{cor}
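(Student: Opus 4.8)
The plan is to derive Corollary~\ref{d} directly from the equivalence of statements~1 and~4 in Theorem~\ref{th2}, specializing to the case where $d_u$ is odd. By Theorem~\ref{th2}, LPST occurs in $\up{2}G$ between $(0,u)$ and $(1,u)$ if and only if the two conditions in statement~4 hold: namely $\sigma_u(G)\subset\Zl$ and $\nu_2(d_u)<\nu_2(\lambda)$ for all $\lambda\in\sigma_u(G)\backslash\{0\}$. First I would observe that the hypothesis $d_u$ odd forces $\nu_2(d_u)=0$, since the $2$-adic norm of an odd integer is zero. With this value fixed, the strict inequality $\nu_2(d_u)<\nu_2(\lambda)$ becomes $0<\nu_2(\lambda)$, which holds precisely when $\lambda$ is a nonzero even integer.

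The main step is then to show that, under the assumption $\sigma_u(G)\subset\Zl$, the condition ``$\nu_2(\lambda)>0$ for every $\lambda\in\sigma_u(G)\backslash\{0\}$'' is equivalent to ``$\sigma_u(G)$ consists of even integers.'' The forward direction is immediate: if every nonzero $\lambda$ in the support has positive $2$-adic norm, then each such $\lambda$ is even, and since $0$ is trivially even and $0\in\sigma_u(G)$ always (because the all-ones vector lies in the kernel of $L$), the entire support consists of even integers. For the reverse direction, if $\sigma_u(G)$ consists of even integers then each nonzero $\lambda$ in the support is a nonzero even integer, whence $\nu_2(\lambda)\geq 1>0=\nu_2(d_u)$. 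I would also note that one need not separately assume $\sigma_u(G)\subset\Zl$ in the statement of the corollary, since ``$\sigma_u(G)$ consists of even integers'' already subsumes integrality of the support; thus the integrality condition in Theorem~\ref{th2}(4) is automatically satisfied in the reverse direction and is implied by the conclusion in the forward direction.

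Assembling these pieces, I would conclude: when $d_u$ is odd, LPST in $\up{2}G$ between $(0,u)$ and $(1,u)$ holds $\iff$ [$\sigma_u(G)\subset\Zl$ and every nonzero element of $\sigma_u(G)$ is even] $\iff$ $\sigma_u(G)$ consists of even integers, which is exactly the claim. I do not expect any serious obstacle here, as the result is an essentially immediate unpacking of the $2$-adic valuation condition in Theorem~\ref{th2}(4) for the special value $\nu_2(d_u)=0$; the only point requiring a moment's care is to verify that integrality of the support is not lost or gained spuriously when passing between the two formulations, and that $0$ being in the support poses no difficulty since $\nu_2(0)=\infty$ and the strict inequality $\nu_2(d_u)<\nu_2(\lambda)$ is vacuous or automatically satisfied at $\lambda=0$ (indeed $\lambda=0$ is excluded from the quantifier in statement~4).
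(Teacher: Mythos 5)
Your proposal is correct and is exactly the paper's (implicit) argument: the paper simply declares the corollary ``immediate from Theorem \ref{th2}(4),'' and your unpacking of $\nu_2(d_u)=0$ versus $\nu_2(\lambda)>0$ is the intended reasoning. The only point worth adding is that Theorem \ref{th2} carries the hypothesis $d_u\notin\sigma_u(G)$, which you should note is automatic here: in the reverse direction an odd $d_u$ cannot lie in a set of even integers, and in the forward direction LPST forces strong cospectrality, which by Theorem \ref{sc}(2) already gives $d_u\notin\sigma_u(G)$.
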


\begin{rem}
\label{rem}
The above corollary applies in particular to regular graphs with odd degree whose eigenvalues are all even integers.
\end{rem}

In \cite[Section 7]{bhattacharjya2024quantum}, LPGST in complete graphs and cycles was characterized. Since these graphs are regular, the same results apply to the Laplacian case. Our next endeavour is to find 
an infinite family of regular graphs whose blow-ups admit LPST.

Recall that a graph $G$ is \textit{Hadamard diagonalizable} if its Laplacian matrix is diagonalizable by a Hadamard matrix. The LPST properties of Hadamard diagonalizable graphs were investigated in \cite{JOHNSTON2017375}. It is known that a Hadamard diagonalizable graph $G$ on $n$ vertices is $d$-regular for some $d$, all eigenvalues in $\sigma(G)$ are even integers and $n\equiv 0$ (mod 4) \cite{Barik2011}. Thus, Corollary \ref{d} and Remark \ref{rem} yield the following result.

\begin{cor}
\label{hd}
If $G$ is a $d$-regular Hadamard diagonalizable graph for some odd $d$, then LPST occurs in $\up{2}G$ between $(0,u)$ and $(1,u)$ for any vertex $u$ of $G$ at $\frac{\pi}{2}$.
\end{cor}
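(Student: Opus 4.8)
\textbf{Proof proposal for Corollary \ref{hd}.}

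The plan is to verify the hypotheses of Corollary \ref{d} for the vertices of a $d$-regular Hadamard diagonalizable graph with $d$ odd, and then read off the stated LPST time from Theorem \ref{th2}. First I would invoke the cited structural fact \cite{Barik2011}: a Hadamard diagonalizable graph $G$ on $n$ vertices is $d$-regular for some $d$, every eigenvalue in $\sigma(G)$ is an even integer, and $n\equiv 0 \pmod 4$. Since we are assuming $d$ is odd, this immediately puts us in the setting of Corollary \ref{d}: the degree $d_u = d$ is odd at every vertex $u$ (by regularity), and $\sigma_u(G)\subseteq\sigma(G)$ consists entirely of even integers.

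Next I would check the side condition of Theorem \ref{th2} that underlies Corollary \ref{d}, namely $d_u\notin\sigma_u(G)$. This is automatic here: since $d_u = d$ is odd and every element of $\sigma_u(G)$ is even, $d$ cannot lie in $\sigma_u(G)$, so the hypotheses of Corollary \ref{d} are genuinely applicable. With $d$ odd and $\sigma_u(G)$ consisting of even integers, Corollary \ref{d} yields LPST in $\up{2}G$ between $(0,u)$ and $(1,u)$. Because $u$ was an arbitrary vertex and $G$ is regular (so every vertex has the same odd degree and the argument is vertex-independent), the conclusion holds for \emph{every} vertex $u$ of $G$.

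Finally I would pin down the LPST time $\frac{\pi}{2}$ using the last line of Theorem \ref{th2}, which gives the minimum time as $\tau = \frac{\pi}{2h}$ with $h = \operatorname{gcd}(\sigma_u(G)\setminus\{0\}\cup\{d_v\})$. Here every nonzero eigenvalue is even, so $2\mid \lambda$ for each $\lambda\in\sigma_u(G)\setminus\{0\}$; combined with $d$ being odd, the term $d$ inside the gcd forces $h$ to be odd, and the only way an odd gcd is consistent with every other entry being even is $h=1$. Thus $\tau = \frac{\pi}{2}$.

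The main obstacle I anticipate is the final gcd computation: one must argue carefully that $h=1$ rather than merely that $h$ is odd. The key point is that the gcd is taken over a set that includes the odd number $d$, so $h$ divides $d$ and is therefore odd; since I also need $h\mid\lambda$ for even $\lambda$, I should confirm there is no subtlety in which degrees $d_v$ appear (here all equal to the single odd value $d$ by regularity). Once $h$ is established to divide an odd number while every eigenvalue contribution is even, the value $h=1$ should follow, but stating this cleanly is where care is needed. Everything else is a direct substitution into the already-proven Corollary \ref{d} and Theorem \ref{th2}.
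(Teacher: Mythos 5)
Your overall route is the same as the paper's: cite the structural fact from \cite{Barik2011} that a Hadamard diagonalizable graph is regular with all Laplacian eigenvalues even integers, observe that $d_u=d$ odd and $\sigma_u(G)\subseteq\sigma(G)$ even forces $d\notin\sigma_u(G)$, and conclude via Corollary \ref{d} (equivalently Theorem \ref{th2}(4)). That part is complete and correct.

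The gap is in your justification of the time $\frac{\pi}{2}$. You argue that $h=\operatorname{gcd}\bigl(\sigma_u(G)\setminus\{0\}\cup\{d\}\bigr)$ must equal $1$ because $h$ divides the odd number $d$ while every other entry of the gcd is even, and ``the only way an odd gcd is consistent with every other entry being even is $h=1$.'' That inference is false: an odd integer greater than $1$ can perfectly well divide even integers (e.g.\ $\operatorname{gcd}(6,12,3)=3$), so all you actually obtain is that $h$ is odd, and the minimum LPST time $\frac{\pi}{2h}$ could in principle be smaller than $\frac{\pi}{2}$. There are two clean ways to repair this. The most direct is to bypass the gcd entirely and verify Theorem \ref{th2}(2) at $\tau=\frac{\pi}{2}$: for every $\lambda\in\sigma_u(G)$ one has $\exp(-2i\lambda\tau)=\exp(-i\pi\lambda)=1$ since $\lambda$ is an even integer, while $\exp(-2id\tau)=\exp(-i\pi d)=-1$ since $d$ is odd; this exhibits $\frac{\pi}{2}$ as an LPST time regardless of the value of $h$. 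Alternatively, keep your observation that $h$ is odd and note that LPST recurs at odd multiples of the minimum LPST time (the period being $\frac{\pi}{h}$, twice the minimum LPST time by Corollary \ref{hc1}), so $\frac{\pi}{2}=h\cdot\frac{\pi}{2h}$ is indeed an LPST time. Either fix is short, but as written the step ``$h=1$'' is a non sequitur.
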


\section{Graph products}\label{sec:gp}

We now construct infinite families of blow-ups in which each vertex is involved in LPST. 

Let $G$ and $H$ be graphs on $m$ and $n$ vertices respectively. The \textit{Cartesian product} of $G$ and $H$, denoted $G\square H$, is the graph with Laplacian matrix
\begin{center}
$L(G\square H)=L(G)\otimes I_n+I_m\otimes L(H)$.
\end{center}

The \textit{direct product} of $G$ and $H$, denoted $G\times H$, is the graph with adjacency matrix
\begin{center}
$A(G\otimes H)=A(G)\otimes A(H)$.
\end{center}

\begin{thm}
\label{cart}
Let $G=G_1\square\cdots\square G_d$, where each $G_j$ is a $k_j$-regular and $k=k_1+\ldots+k_d$ is odd. If $\sigma(G_j)$ consists of even integers for each $j\in\{1,\ldots,d\}$, then LPST occurs in $\up{2}G$ between $(0,u)$ and $(1,u)$ for any vertex $u$ of $G$ at $\frac{\pi}{2}$.
\end{thm}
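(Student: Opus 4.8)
The plan is to reduce Theorem \ref{cart} to a direct application of Corollary \ref{hd}, or failing that, of Corollary \ref{d} together with Theorem \ref{th2}(4). The key observation is that a Cartesian product of regular graphs is regular, and its Laplacian spectrum is completely determined by the spectra of the factors. First I would record the standard fact that if each $G_j$ is $k_j$-regular, then $G=G_1\square\cdots\square G_d$ is $k$-regular with $k=k_1+\cdots+k_d$; this follows immediately from the defining identity $L(G\square H)=L(G)\otimes I_n+I_m\otimes L(H)$, since the Laplacian of a regular graph is $kI-A$ and the degrees add under the Cartesian product. Thus the hypothesis that $k$ is odd means precisely that $G$ has odd regularity $d_u=k$ for every vertex $u$.

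Next I would pin down the Laplacian spectrum of $G$. Iterating the Cartesian-product Laplacian formula shows that $L(G)=\sum_{j=1}^d I\otimes\cdots\otimes L(G_j)\otimes\cdots\otimes I$, where the factors commute, so the eigenvalues of $L(G)$ are exactly the sums $\mu_1+\cdots+\mu_d$ with $\mu_j\in\sigma(G_j)$. By hypothesis each $\sigma(G_j)$ consists of even integers, and a sum of even integers is an even integer; hence every Laplacian eigenvalue of $G$ is an even integer. In particular $\sigma(G)\subset\Zl$, so $\sigma_u(G)\subset\Zl$ and $G$ is Laplacian integral, and moreover $\sigma_u(G)$ consists of even integers for every vertex $u$.

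With these two facts in hand the conclusion is essentially immediate. Since $d_u=k$ is odd and $\sigma_u(G)$ consists of even integers, we first note $d_u\notin\sigma_u(G)$, so the hypothesis of Theorem \ref{th2} is satisfied. Then Corollary \ref{d} applies directly: $d_u$ is odd and $\sigma_u(G)$ consists of even integers, so LPST occurs in $\up{2}G$ between $(0,u)$ and $(1,u)$. For the explicit time, I would verify Theorem \ref{th2}(4): every nonzero $\lambda\in\sigma_u(G)$ is even so $\nu_2(\lambda)\geq 1>0=\nu_2(k)=\nu_2(d_u)$, and then compute the minimum LPST time $\tau=\frac{\pi}{2h}$ with $h=\operatorname{gcd}(\sigma_u(G)\setminus\{0\}\cup\{d_v\})$. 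Because $d_u=k$ is odd while every other listed value is even, this gcd equals $1$, giving $\tau=\frac{\pi}{2}$, exactly as claimed.

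The argument has no genuine obstacle; it is a packaging of spectral bookkeeping into the hypotheses of the earlier characterizations. The only points requiring care are the clean derivation of the product spectrum (ensuring the eigenvalues really are all the sums $\sum_j\mu_j$, which uses that the summands in the Laplacian decomposition commute and are simultaneously diagonalizable), and the gcd computation showing $h=1$. The potential subtlety worth double-checking is the interaction between the eigenvalue support $\sigma_u(G)$ and the full spectrum $\sigma(G)$: Corollary \ref{d} and Theorem \ref{th2} are stated in terms of the support $\sigma_u(G)$, but since we have shown \emph{all} of $\sigma(G)$ consists of even integers, the containment $\sigma_u(G)\subseteq\sigma(G)$ guarantees the support consists of even integers as well, so no vertex-dependent difficulty arises and the conclusion holds uniformly for every vertex $u$.
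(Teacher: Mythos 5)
Your argument is correct and follows essentially the same route as the paper: regularity and the additivity of Laplacian spectra under Cartesian products give that $\sigma(G)$ consists of even integers while $d_u=k$ is odd, so $d_u\notin\sigma_u(G)$ and Corollary \ref{d} (equivalently Theorem \ref{th2}(4)) finishes the job. The one quibble is your claim that $h=\operatorname{gcd}\left(\sigma_u(G)\setminus\{0\}\cup\{d_u\}\right)=1$: an odd number can share an odd common factor with even numbers, so a priori $h$ is only guaranteed to be odd; this still yields LPST at $\frac{\pi}{2}$, since that is an odd multiple of the minimum LPST time $\frac{\pi}{2h}$.
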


\begin{proof}
First, note that $G$ is $k$-regular. The Laplacian eigenvalues of the Cartesian product of $X$ and $Y$ are all of the form $\lambda+\mu$, where $\lambda\in\sigma(X)$ and $\lambda\in\sigma(Y)$. Since $\sigma(G_{j})$ consists of even integers for each $j\in\{1,\ldots,d\}$, applying the preceding fact $d-1$ times to $G$ starting with $G_{1}$, we get that the eigenvalues in $\sigma(G)$ are all even. Since $k$ is odd, invoking Corollary \ref{d} and Remark \ref{rem} yield the desired result.
\end{proof}

The \textit{Hamming graph}, denoted $H(d,q)$, is the Cartesian product of $d$ copies of $K_q$.

\begin{cor}
\label{cart1}
If $G=K_{n_1}\square\cdots\square K_{n_d}$, where each $n_j\geq 2$ is even and $d\geq 2$ is odd, then LPST occurs in $\up{2}G$ between $(0,u)$ and $(1,u)$ for any vertex $u$ of $G$. In particular, if $q$ is even, then LPST occurs in $\up{2}H(d,q)$ between $(0,u)$ and $(1,u)$ for any vertex $u$ of $H(d,q)$.
\end{cor}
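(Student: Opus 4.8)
The plan is to deduce Corollary~\ref{cart1} directly from Theorem~\ref{cart}, treating the latter as a black box. The main task is therefore to verify that the hypotheses of Theorem~\ref{cart} are satisfied when each factor is a complete graph $K_{n_j}$ with $n_j$ even and the number $d$ of factors is odd. First I would record the two facts I need about $K_{n_j}$: it is $(n_j-1)$-regular, and its distinct Laplacian eigenvalues are $\sigma(K_{n_j})=\{0,n_j\}$. Both are standard, and the eigenvalue computation was already used in the proof of Corollary~\ref{reg}.

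Next I would check the parity conditions in Theorem~\ref{cart}. Setting $G_j=K_{n_j}$, the regularity is $k_j=n_j-1$, which is odd precisely because $n_j$ is even. The total degree is $k=\sum_{j=1}^d k_j=\sum_{j=1}^d(n_j-1)$, a sum of $d$ odd numbers; since $d$ is odd, $k$ is odd, giving the required odd-degree hypothesis. Separately, $\sigma(K_{n_j})=\{0,n_j\}$ consists of even integers because $n_j$ is even, so the even-eigenvalue hypothesis on each factor holds as well. With all hypotheses of Theorem~\ref{cart} verified, I conclude that LPST occurs in $\up{2}G$ between $(0,u)$ and $(1,u)$ for every vertex $u$ of $G=K_{n_1}\square\cdots\square K_{n_d}$, at time $\frac{\pi}{2}$.

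Finally I would specialize to the Hamming graph. When $q$ is even and each $n_j=q$, the graph $G$ becomes $H(d,q)=K_q\square\cdots\square K_q$ ($d$ copies), so the general statement applies verbatim, yielding LPST in $\up{2}H(d,q)$. The one point of care is the requirement $d\geq 2$ odd: for $d=1$ the Cartesian product is just a single $K_q$ and Theorem~\ref{cart} would still apply (a single odd-degree regular graph with even eigenvalues falls under Corollary~\ref{d}), so the restriction $d\geq 2$ is a mild convention rather than a genuine obstruction.

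I do not anticipate a serious obstacle here: the entire argument is a parity bookkeeping on top of Theorem~\ref{cart}. The only subtlety worth stating explicitly is the interplay of the two parity conditions---each factor must contribute an \emph{odd} degree $n_j-1$ (forcing $n_j$ even) yet simultaneously \emph{even} eigenvalues $\{0,n_j\}$ (also forcing $n_j$ even)---so that the single hypothesis ``$n_j$ even'' conveniently secures both, while the oddness of $d$ is what preserves the oddness of the total degree $k$ through the summation.
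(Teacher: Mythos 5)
Your proposal is correct and follows essentially the same route as the paper: set $G_j=K_{n_j}$, $k_j=n_j-1$ in Theorem~\ref{cart}, note $\sigma(K_{n_j})=\{0,n_j\}$ is even while $k=\sum_j(n_j-1)$ is odd since $d$ is odd, and then specialize to $n_1=\cdots=n_d=q$ for the Hamming graph. Your explicit parity bookkeeping and the aside about $d=1$ are fine elaborations but do not change the argument.
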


\begin{proof}
Since $\sigma(K_{n_j})=\{0,n_j\}$ consists of even integers for all $j\in\{1,\ldots,d\}$, letting $G_j=K_{n_j}$ and $k_j=n_{j}-1$ in Theorem \ref{cart} yields the first statement. The second statement follows from the first by taking $n_1=\cdots=n_d=q$ to be even.
\end{proof}

Since $H(2,d)=Q_d$, the second statement of Corollary \ref{cart1} together with Corollary \ref{reg}(iii) implies that $\up{2}Q_d$ admits LPST if and only if $d$ is odd. In this case, LPST occurs between vertices $(0,u)$ and $(1,u)$ for any vertex $u$ of $Q_d$ at time $\frac{\pi}{2}$.

We now use the direct product to construct infinite families of blow-ups with LPST.

\begin{thm}
\label{dir}
Let $G=G_{1}\times\cdots\times G_{d}$ such that each $G_j$ is $k_j$-regular where each $k_j\geq 1$. 
If for each $j$, the adjacency eigenvalues of $G_j$ have equal 2-adic norms, then LPST occurs in $\up{2}G$ between $(0,u)$ and $(1,u)$ for any vertex $u$ of $G$.
\end{thm}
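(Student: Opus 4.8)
The plan is to verify condition (4) of Theorem \ref{th2} for the graph $G$ and then invoke the equivalence $(4)\Rightarrow(1)$ established there. First I would record the basic structure: the direct product of regular graphs is regular, so since each $G_j$ is $k_j$-regular, $G$ is $k$-regular with $k=k_1\cdots k_d=d_u$ for every vertex $u$. Regularity gives $L(G)=kI-A(G)$, so every Laplacian eigenvalue has the form $\lambda=k-\theta$ where $\theta$ is an adjacency eigenvalue of $G$; and since $A(G)=A(G_1)\otimes\cdots\otimes A(G_d)$, these $\theta$ are exactly the products $\theta_1\cdots\theta_d$ with $\theta_j$ an adjacency eigenvalue of $G_j$. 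Note that Corollary \ref{d} does not apply directly, since $k$ need not be odd, so the $2$-adic criterion of Theorem \ref{th2}(4) is the appropriate tool.

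The crux is the observation that all adjacency eigenvalues of $G$ share a common $2$-adic norm. For each $j$, the degree $k_j$ is itself an adjacency eigenvalue of the $k_j$-regular graph $G_j$, so the common value of $\nu_2$ on the adjacency spectrum of $G_j$ equals $\nu_2(k_j)$. By additivity of $\nu_2$ over products, every adjacency eigenvalue $\theta_1\cdots\theta_d$ of $G$ satisfies $\nu_2(\theta_1\cdots\theta_d)=\sum_{j=1}^{d}\nu_2(\theta_j)=\sum_{j=1}^{d}\nu_2(k_j)=\nu_2(k)$. In particular $0$ is not an adjacency eigenvalue of $G$, since $\nu_2(0)=\infty\neq\nu_2(k)$; hence $\lambda=k$ is not a Laplacian eigenvalue, and so $d_u=k\notin\sigma(G)\supseteq\sigma_u(G)$, which secures the standing hypothesis $d_u\notin\sigma_u(G)$ of Theorem \ref{th2}. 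Integrality is inherited as well: the adjacency eigenvalues of each $G_j$ are integers (being assigned a $2$-adic norm), so $\sigma_u(G)\subseteq\sigma(G)\subset\Zl$.

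It then remains to check $\nu_2(d_u)<\nu_2(\lambda)$ for every nonzero $\lambda\in\sigma_u(G)$. Writing $s=\nu_2(k)$, we have $k=2^s\cdot(\text{odd})$, and any nonzero Laplacian eigenvalue has the form $\lambda=k-\theta$ with $\theta$ an adjacency eigenvalue of $G$ satisfying $\theta\neq k$ and $\nu_2(\theta)=s$, i.e.\ $\theta=2^s\cdot(\text{odd})$. Thus $\lambda=2^s\,(\text{odd}-\text{odd})=2^s\cdot(\text{even})$, giving $\nu_2(\lambda)\geq s+1>s=\nu_2(d_u)$. This is exactly Theorem \ref{th2}(4), so $\up{2}G$ exhibits LPST between $(0,u)$ and $(1,u)$, at the minimum time specified in that theorem.

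The computation is largely routine once the spectral structure is in place; the step requiring the most care is the reduction from the factors to $G$ in the second paragraph, namely that the common $2$-adic norm of each factor's adjacency spectrum equals $\nu_2(k_j)$ (which rests on the degree being an eigenvalue of a regular graph) and that this simultaneously forces $d_u\notin\sigma_u(G)$. With those two points settled, the difference-of-odd-numbers calculation delivers the required $2$-adic inequality automatically.
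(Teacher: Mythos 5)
Your proposal is correct and follows essentially the same route as the paper: show that all adjacency eigenvalues of $G$ share the $2$-adic norm $\nu_2(k)$ (using multiplicativity of $\nu_2$ and the fact that $k_j$ is itself an eigenvalue of the regular graph $G_j$), deduce that $0\notin$ the adjacency spectrum so $d_u=k\notin\sigma_u(G)$, and conclude $\nu_2(k-\theta)>\nu_2(k)$ for every adjacency eigenvalue $\theta$, which is exactly condition (4) of Theorem \ref{th2}. Your write-up is in fact slightly more explicit than the paper's (e.g.\ in justifying why the common norm of each factor's spectrum is $\nu_2(k_j)$ and in the odd-minus-odd calculation), but the underlying argument is identical.
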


\begin{proof}
Our assumption that the adjacency eigenvalues of $G_j$ for each $j$ consists of integers with equal 2-adic norms implies that $0$ is not an adjacency eigenvalue of any $G_j$. Now, the adjacency eigenvalues of $X\times Y$ are all of the form $\lambda\mu$, where $\lambda\in\sigma(X)$ and $\lambda\in\sigma(Y)$. Combining this fact with our assumption implies that the adjacency eigenvalues $\theta_{r}$ of $G$ have equal 2-adic norms. Thus, $\theta_r\neq 0$ for all $r$. Because $G$ is $k$-regular, where $k=\prod_jk_j$, each Laplacian eigenvalue $\lambda_r\in\sigma(G)$ satisfies $\lambda_r:=k-\theta_r\neq k$ for all $r$. 
Since the $\theta_{r}$'s have equal 2-adic norms, we have 
$\nu_2(k)=\nu_2(k-\lambda_r)=\nu_2(k-\lambda_s)$ for all $\lambda_r,\lambda_s\in\sigma(G)$. Equivalently, $\nu_2(k)<\nu_2(\lambda_r)$. Invoking Theorem \ref{th2}(4) yields the desired conclusion. 
\end{proof}

\begin{cor}
\label{dir1}
If $G=K_{n_1}\times\cdots\times K_{n_d}$, where each $n_j\geq 2$ is even, then LPST occurs in $\up{2}G$ between $(0,u)$ and $(1,u)$ for any vertex $u$ of $G$.
\end{cor}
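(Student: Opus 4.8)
The goal is to prove Corollary~\ref{dir1}: if $G=K_{n_1}\times\cdots\times K_{n_d}$ with each $n_j\geq 2$ even, then LPST occurs in $\up{2}G$ between $(0,u)$ and $(1,u)$ for every vertex $u$. The natural plan is to verify that the hypothesis of Theorem~\ref{dir} is satisfied, since the corollary is explicitly an application of that theorem to complete graphs. Thus I would set $G_j=K_{n_j}$ for each $j$ and check the single nontrivial condition: that the adjacency eigenvalues of each $G_j$ have equal $2$-adic norms.

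First I would recall that $K_{n_j}$ is $(n_j-1)$-regular and that its adjacency spectrum is $\{n_j-1,\,-1\}$, with $-1$ occurring with multiplicity $n_j-1$. So the adjacency eigenvalues of $K_{n_j}$ are precisely $n_j-1$ and $-1$. Since $n_j$ is even, $n_j-1$ is odd, and of course $-1$ is odd, so both eigenvalues are odd integers. The $2$-adic norm of any odd integer is $0$, hence $\nu_2(n_j-1)=\nu_2(-1)=0$, and the two eigenvalues have equal $2$-adic norm. This holds for every $j$, so the hypothesis of Theorem~\ref{dir} is met.

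With the hypothesis verified, I would invoke Theorem~\ref{dir} directly: since each $G_j=K_{n_j}$ is $k_j$-regular with $k_j=n_j-1\geq 1$ and has adjacency eigenvalues of equal $2$-adic norm, LPST occurs in $\up{2}G$ between $(0,u)$ and $(1,u)$ for any vertex $u$ of $G=K_{n_1}\times\cdots\times K_{n_d}$. This completes the argument.

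I do not anticipate a genuine obstacle here, as the proof is a short specialization of Theorem~\ref{dir}. The only point requiring the smallest care is the explicit computation of the adjacency spectrum of $K_{n_j}$ and the observation that evenness of $n_j$ forces $n_j-1$ to be odd — this is precisely what guarantees the equal $2$-adic norms, since all eigenvalues then lie in the same residue class (odd) and so share the common $2$-adic norm $0$. Everything else follows immediately from the cited theorem.
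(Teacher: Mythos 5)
Your proposal is correct and follows exactly the paper's own argument: both verify that the adjacency eigenvalues $-1$ and $n_j-1$ of $K_{n_j}$ are odd (hence of equal $2$-adic norm $0$) because $n_j$ is even, and then apply Theorem~\ref{dir} with $G_j=K_{n_j}$. No gaps; nothing further to add.
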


\begin{proof}
Since each $n_j$ is even, the adjacency eigenvalues $-1$ and $n_j-1$ of $K_{n_j}$ are both odd. Applying Theorem \ref{dir} with $G_j=K_{n_j}$ for each $j$ yields the result.
\end{proof}

For a non-bipartite graph $H$, the graph $K_2\times H$ is known as the \textit{bipartite double} of $H$. The following is immediate from Theorem \ref{dir}.

\begin{cor}
\label{dir2}
Let $H$ be a regular non-bipartite graph whose adjacency eigenvalues are integers with equal $\nu_2$ values. If $G=K_{2}\times H$, then LPST occurs in $\up{2}G$ between $(0,u)$ and $(1,u)$ for any vertex $u$ of $G$.
\end{cor}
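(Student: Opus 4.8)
The plan is to obtain this as a direct specialization of Theorem \ref{dir} with $d=2$, taking the two factors to be $G_1 = K_2$ and $G_2 = H$, so that $G = G_1 \times G_2 = K_2 \times H$. First I would verify the regularity hypotheses of Theorem \ref{dir}. The graph $K_2$ is $1$-regular, so $k_1 = 1 \geq 1$; and $H$ is regular by assumption, say $k$-regular, with $k \geq 1$ (indeed, a regular non-bipartite graph contains an odd cycle, forcing $k \geq 2$). Thus both factors meet the requirement $k_j \geq 1$.

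Next I would check the $2$-adic norm condition on each factor. The adjacency eigenvalues of $K_2$ are $1$ and $-1$, both of which satisfy $\nu_2 = 0$ and hence share a common $2$-adic norm. For $H$, the stated hypothesis—that its adjacency eigenvalues are integers with equal $\nu_2$ values—is exactly the condition demanded by Theorem \ref{dir}. (Note in passing that equal $\nu_2$ values already preclude $0$ from being an adjacency eigenvalue of either factor, since $\nu_2(0) = \infty$; this is the feature that the proof of Theorem \ref{dir} exploits to guarantee $d_u \notin \sigma_u(G)$.) With both factors satisfying the hypotheses, Theorem \ref{dir} applies verbatim and yields LPST in $\up{2}G$ between $(0,u)$ and $(1,u)$ for every vertex $u$ of $G$.

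I would also record where the non-bipartite assumption enters. The entire state-transfer framework presumes a connected host graph, and $K_2 \times H$ is connected precisely when $H$ is connected and non-bipartite (if $H$ were bipartite, $K_2 \times H$ would decompose into two disjoint copies of $H$). Hence the non-bipartiteness of $H$ ensures that the bipartite double $G = K_2 \times H$ is itself connected, so that the setup of the preceding theorems is legitimately in force.

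There is essentially no analytic obstacle: the substantive content is fully absorbed into Theorem \ref{dir}, and the corollary amounts to checking that $K_2$ furnishes a valid second factor. The only point meriting mild care is confirming that the equal-$\nu_2$ property survives the passage to the direct product—but this is exactly what the proof of Theorem \ref{dir} delivers, via the fact that the adjacency eigenvalues of a direct product are the pairwise products $\lambda\mu$, which preserve a common $2$-adic norm and in particular remain nonzero.
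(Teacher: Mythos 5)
Your proof is correct and takes the same route as the paper, which simply observes that the corollary is immediate from Theorem \ref{dir} applied to the factors $K_2$ (with eigenvalues $\pm 1$ of equal $2$-adic norm) and $H$; your additional remarks on connectedness via non-bipartiteness fill in details the paper leaves implicit.
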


In Corollary \ref{dir2}, taking $H=K_n$ for even $n$ yields an infinite family of blow-ups of bipartite doubles where each vertex is involved in LPST.

\begin{rem}
Let $n_j\geq 3$ be even and $X\in\{G,H\}$, where $G=K_{n_1}\square\cdots\square K_{n_d}$ for odd $d\geq 2$ and $H=K_{n_1}\times\cdots\times K_{n_d}$ with $\prod_{j=1}^d(n_j-1)\neq \frac{1}{2}\prod_{j=1}^dn_j$. By Theorem 6.2 in \cite{sed} and Theorem 34(2) \cite{monterde2023new} respectively, $G$ and $H$ are sedentary. That is, for each vertex $u$ of $X$,  $|U(t)_{u,u}|$ is bounded away from 0 for all $t$. For this reason, $X$ do not admit LPST. Invoking Corollaries \ref{cart1} and \ref{dir1} respectively, we get that $\up{2}X$ admits LPST between $(0,u)$ and $(1,u)$ for any vertex $u$ of $X$. This yields infinite families of regular blow-up graphs where each vertex is involved in LPST, but the underlying graphs do not admit LPST.
\end{rem}

\section{Joins}\label{sec:join}

Let $G$ and $H$ be graphs on $m$ and $n$ vertices, respectively. The \textit{join} of $G$ and $H$, denoted $G\vee H$, is the graph obtained by adding all edges between $G$ and $H$ \cite{kirkland2023quantum}. Note that $\up{2}(G\vee H)=(\up{2}G)\vee (\up{2} H)$, and so the blow-up of a join is simply the join of the blow-ups of the underlying graphs.

\begin{thm}
\label{joins}
Let $u$ be a vertex of $G$ such that $d_u\notin\sigma_u(G)$. Then 
LPST occurs in $\up{2}(G\vee H)$ between $(0,u)$ and $(1,u)$ if and only if $\sigma_u(G)\subset\Zl$ and one of the following conditions hold. 
\begin{enumerate}
\item $G$ is connected, $\nu_2(m+n)>\nu_2(d_u+n)$ and $\nu_2(\lambda+n)>\nu_2(d_u+n)$ for all $\lambda\in\sigma_u(G)\backslash \{0\}$.
\item $G$ is disconnected, $u$ is not an isolated vertex in $G$, $\nu_2(n)>\nu_2(d_u+n)$, $\nu_2(m+n)>\nu_2(d_u+n)$ and $\nu_2(\lambda+n)>\nu_2(d_u+n)$ for all $\lambda\in\sigma_u(G)\backslash \{0\}$.
\end{enumerate}
Moreover, the minimum LPST time is $\tau = \frac{\pi}{2h}$, where $h=\operatorname{gcd}(S)$, where $S=\{\lambda+n:\lambda\in\sigma_u(G)\backslash \{0\}\cup \{d_u,m\}\}$ if 1 holds, and $S=\{\lambda+n:\lambda\in\sigma_u(G)\cup \{d_u,m\}\}$ otherwise.

\end{thm}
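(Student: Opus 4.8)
The plan is to reduce the join problem to the already-established LPST criterion of Theorem \ref{th2}(4) applied to the graph $G\vee H$, and to do so I must first compute the eigenvalue support $\sigma_u(G\vee H)$ and the degree of $u$ in $G\vee H$ in terms of spectral data of $G$. First I would recall the standard Laplacian spectrum of a join: if $G$ has $m$ vertices and $H$ has $n$ vertices, then the nonzero Laplacian eigenvalues of $G\vee H$ are $m+n$ (with multiplicity one, from the all-ones-type vector that distinguishes the two sides), together with $\lambda+n$ for each nonzero $\lambda\in\sigma(G)$ and $\mu+m$ for each nonzero $\mu\in\sigma(H)$, plus the eigenvalue $0$. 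The key point is to track \emph{which} of these eigenvalues lie in the support of the specific vertex $u\in V(G)$. Since eigenvectors associated to $\mu+m$ are supported on the $H$-side (they are eigenvectors of $L(H)$ extended by zero on $G$), they contribute nothing to $\sigma_u(G\vee H)$. Thus $\sigma_u(G\vee H)$ consists of $0$, the shifted eigenvalues $\lambda+n$ for $\lambda\in\sigma_u(G)\setminus\{0\}$, and the eigenvalue $m+n$ \emph{precisely when} the vector separating the two sides has a nonzero $u$-component after projection — this is where the connected-versus-disconnected dichotomy enters.

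The main obstacle, and the place where the two cases in the statement come from, is determining whether $m+n\in\sigma_u(G\vee H)$ and whether the shifted value $d_u+n$ (the new degree of $u$, since $u$ gains $n$ neighbours from $H$) coincides with a genuine eigenvalue versus a degree that must be excluded from the support. I would argue as follows. The degree of $u$ in $G\vee H$ is $d_u+n$, and the hypothesis $d_u\notin\sigma_u(G)$ translates into $d_u+n\notin\{\lambda+n:\lambda\in\sigma_u(G)\}$, so the ``degree eigenvalue'' condition needed for Theorem \ref{th2} is inherited. For the eigenvalue $m+n$: when $G$ is connected, the $0$-eigenspace of $L(G)$ is spanned by the all-ones vector on $G$, and the eigenvector of $G\vee H$ at $m+n$ restricted to $G$ is a nonzero multiple of this all-ones vector, hence has nonzero $u$-entry and $m+n\in\sigma_u(G\vee H)$. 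When $G$ is disconnected, the $0$-eigenspace of $L(G)$ is higher-dimensional, and one can choose the relevant eigenvector to vanish on the component containing $u$ unless $u$ sits in a constrained position; the condition that $u$ is not isolated together with the extra requirement $\nu_2(n)>\nu_2(d_u+n)$ accounts for the additional eigenvalue (namely $n$, arising as $0+n$ from a nonzero $0$-eigenvector of $L(G)$ supported away from all-ones) now entering $\sigma_u(G\vee H)$.

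With the support identified, the remaining steps are mechanical applications of Theorem \ref{th2}(4), which requires $\sigma_u(G\vee H)\subset\Zl$ and $\nu_2(d_u+n)<\nu_2(\eta)$ for every nonzero $\eta\in\sigma_u(G\vee H)$. Integrality of the support is equivalent to $\sigma_u(G)\subset\Zl$ (since all shifts are by the integers $n$ or $m$, and LPST forces $m,n$ to be comparable in $2$-adic norm anyway). The $2$-adic conditions then read off directly: $\nu_2(d_u+n)<\nu_2(\lambda+n)$ for each $\lambda\in\sigma_u(G)\setminus\{0\}$, $\nu_2(d_u+n)<\nu_2(m+n)$ from the $m+n$ eigenvalue, and in the disconnected case the supplementary $\nu_2(d_u+n)<\nu_2(n)$ from the eigenvalue $n$. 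These are exactly the inequalities listed in conditions 1 and 2. For the minimum LPST time, I would invoke the formula $\tau=\frac{\pi}{2h}$ from Theorem \ref{th2}, where $h=\operatorname{gcd}(\sigma_u(G\vee H)\setminus\{0\}\cup\{d_{u,\,G\vee H}\})$; substituting $d_{u,G\vee H}=d_u+n$ and the computed support yields $h=\operatorname{gcd}(S)$ with $S$ as stated, the only bookkeeping being whether the extra element $n=0+n$ (equivalently, whether $0$ is retained inside the set $\sigma_u(G)$ contributing to $S$) is present, which distinguishes the connected case ($S$ uses $\sigma_u(G)\setminus\{0\}$) from the disconnected case ($S$ uses $\sigma_u(G)$).
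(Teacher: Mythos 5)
Your overall route is the same as the paper's: identify $\sigma_u(G\vee H)$ (the paper quotes this from a lemma in the joins literature; you rederive it from the join spectrum, which is fine), observe that the degree of $u$ in $G\vee H$ is $d_u+n$, and then read off the conditions from Theorem \ref{th2}(4). However, there is a genuine gap in how you verify the standing hypothesis of Theorem \ref{th2}, namely that the degree of $u$ in $G\vee H$ avoids $\sigma_u(G\vee H)$. You check only that $d_u+n\notin\{\lambda+n:\lambda\in\sigma_u(G)\}$, but the support also contains $m+n$ (harmless, since $d_u<m$, though you should say so) and, when $G$ is disconnected, the eigenvalue $n$ itself. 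That last element is where your argument goes wrong: $n$ lies in $\sigma_u(G\vee H)$ \emph{whenever} $G$ is disconnected, because the vector $\e_u-\frac{1}{m}\l$ restricted to the appropriate component structure (e.g.\ the indicator of the component of $u$ minus a multiple of the all-ones vector) is a $0$-eigenvector of $L(G)$ orthogonal to $\l$ with nonzero $u$-entry; one cannot ``choose the relevant eigenvector to vanish on the component containing $u$'' and thereby exclude $n$ from the support. So your account of where the hypothesis ``$u$ is not an isolated vertex'' comes from is incorrect.

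The actual role of that hypothesis, as in the paper's proof, is to guarantee $d_u+n\neq n$: if $u$ is isolated then $d_u=0$, so the degree of $u$ in $G\vee H$ equals $n$, which \emph{is} in $\sigma_u(G\vee H)$; by Theorem \ref{sc}(2) the vertices $(0,u)$ and $(1,u)$ are then not strongly cospectral in $\up{2}(G\vee H)$ and LPST is impossible outright, independently of any $2$-adic inequality. This is needed for the ``only if'' direction of condition 2 and is not recoverable from your argument as written, since your reduction to Theorem \ref{th2}(4) silently assumes $d_u+n\notin\sigma_u(G\vee H)$ in all cases. Once this point is repaired (and the easy check $d_u+n<m+n$ is recorded), the rest of your proposal — integrality, the $2$-adic inequalities, and the minimum-time bookkeeping with the extra element $n$ in the disconnected case — matches the paper's proof.
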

\begin{proof}
Let $u$ be a vertex of $G$ such that $d_u\notin\sigma_u(G)$. Define $S=\{\lambda+n:\lambda\in\sigma_u(G)\backslash\{0\}\}$. By \cite[Lemma 3]{kirkland2023quantum}, we have $\sigma_{u}(G\vee H)=S\cup\{0,m+n\}$ whenever $G$ is connected, and $\sigma_{u}(G\vee H)=S\cup\{0,m+n,n\}$ otherwise. Now, the degree of vertex $u$ in $G\vee H$ is $d_u+n$. First, suppose $G$ is connected. If $G\neq K_1$, then $0<d_u<m$ and so $n<d_u+n<m+n$. Since $d_u\notin\sigma_u(G)$, we get that $d_u+n\notin \sigma_{u}(G\vee H)$. If $G=K_1$ (which is connected), then $d_u+n=n\notin \sigma_{u}(G\vee H)=\{0,m+n\}$. Applying Theorem \ref{th2}(4) to both cases yields the desired result in 1. Now, suppose $G$ is disconnected. If $u$ is not isolated, then $d_u+n\notin \sigma_{u}(G\vee H)$, and so Theorem \ref{th2}(4) yields the desired result in 2. On the other hand, if $u$ is isolated, then $d_u+n=n\in \sigma_u(G\vee H)$. By Theorem \ref{sc}(2), vertices $(0,u)$ and $(1,u)$ are not strongly cospectral in $\up{2}(G\vee H)$, and so they cannot admit LPST. Therefore, $u$ must not be an isolated vertex in $G$ if $(0,u)$ were to admit LPST in $\up{2}(G\vee H)$.
\end{proof}

\begin{cor}
\label{joincor}
The following hold.
\begin{enumerate}
\item If $G$ is connected and $\sigma(G)\cup\{m,n\}\subset \Zl$ consists of odd integers, then LPST occurs in $\up{2}(G\vee H)$ between $(0,u)$ and $(1,u)$ if and only if $d_u$ is even.
\item If $\sigma(G)\cup\{m,n\}\subset \Zl$ consists of  even integers, then LPST occurs in $\up{2}(G\vee H)$ between $(0,u)$ and $(1,u)$ if and only if $d_u$ is  odd. 
\end{enumerate}
\end{cor}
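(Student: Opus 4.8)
The plan is to derive Corollary \ref{joincor} directly from Theorem \ref{joins} by reading its $\nu_2$-conditions through parity. Throughout, $u$ is a vertex of $G$, its degree in $G\vee H$ is $d_u+n$, and the nonzero elements of $\sigma_u(G\vee H)$ are $\{\lambda+n:\lambda\in\sigma_u(G)\setminus\{0\}\}$ together with $m+n$ (and also $n$ when $G$ is disconnected). Since every quantity appearing in Theorem \ref{joins} has the shape $x+n$, the hypotheses that $m$, $n$ and the nonzero Laplacian eigenvalues all share a common parity make each such $x+n$ either odd (when $x$ and $n$ have opposite parity) or even (when they agree). So the whole argument reduces to tracking the parity of $d_u+n$ against that of the eigenvalues $\lambda+n$ and the quantities $m+n$, $n$.

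For the sufficiency direction I would argue as follows. In part (1), $m$, $n$ and every nonzero $\lambda\in\sigma_u(G)$ are odd, so $m+n$ and each $\lambda+n$ are even and hence have $\nu_2\ge 1$; if in addition $d_u$ is even, then $d_u+n$ is odd, so $\nu_2(d_u+n)=0$ is strictly smaller than every relevant valuation, and Theorem \ref{joins}(1) applies to give LPST. Part (2) is the mirror image: $m$, $n$ and the nonzero $\lambda$ are even, so $m+n$, $n$ and each $\lambda+n$ have $\nu_2\ge 1$, while $d_u$ odd makes $d_u+n$ odd with $\nu_2(d_u+n)=0$. Here I would split into the connected and disconnected cases of Theorem \ref{joins}, checking the extra hypothesis $\nu_2(n)>\nu_2(d_u+n)$ in the disconnected case (which holds since $n$ is even and $d_u+n$ is odd); note that $d_u$ odd forces $d_u\ge 1$, so $u$ is never isolated in this direction. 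In both parts the minimum time $\frac{\pi}{2h}$ comes along for free from the last line of Theorem \ref{joins}.

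The necessity direction is where I expect the real work to be, and it is the main obstacle. Here $d_u$ has the opposite parity, so $d_u+n$ is even and $\nu_2(d_u+n)\ge 1$; to conclude that no LPST occurs I must produce some nonzero $\mu\in\sigma_u(G\vee H)$ with $\nu_2(\mu)\le \nu_2(d_u+n)$, i.e. I must rule out that every eigenvalue in the support of $u$ has strictly larger $2$-adic valuation than $d_u+n$. Parity alone does not achieve this, since all candidate $\mu$ are even. One clean sub-case is $d_u\in\sigma_u(G)$ (in particular when $u$ is isolated): then $d_u+n$ itself lies in $\sigma_u(G\vee H)$, so the degree of $u$ in $G\vee H$ belongs to its own eigenvalue support, and Theorem \ref{sc}(2) (applied to $G\vee H$) rules out strong cospectrality of $(0,u)$ and $(1,u)$ in $\up{2}(G\vee H)$, hence LPST. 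The genuine difficulty is the complementary case $d_u\notin\sigma_u(G)$ with all support-eigenvalues of high valuation; I would attack it either by showing that under the uniform-parity hypotheses some support-eigenvalue is always $\equiv 2\pmod 4$ (so that the minimum valuation over the support is exactly $1$, forcing $\nu_2(d_u+n)=0$ and hence $d_u+n$ odd), or by exploiting a structural constraint (for instance a Fiedler-type bound forcing a small eigenvalue into the support of a low-degree vertex) to exhibit the required low-valuation $\mu$. Establishing this ``small-valuation eigenvalue'' fact is the crux on which the iff rests.
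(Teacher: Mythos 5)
Your sufficiency argument is correct and coincides with what the paper does: under the uniform-parity hypotheses, $d_u+n$ is odd exactly when $d_u$ has the stated parity, so $\nu_2(d_u+n)=0$ while each of $m+n$, $\lambda+n$ (and $n$ in the disconnected branch) is even with $\nu_2\geq 1$, and Theorem \ref{joins} applies. Your checks that $d_u$ odd prevents $u$ from being isolated in part (2), and that the disconnected branch needs the extra condition $\nu_2(n)>\nu_2(d_u+n)$, are also right.

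The gap is in the necessity direction, and you have named it without closing it. When $d_u$ has the ``wrong'' parity, all of $d_u+n$, $m+n$, $n$ and $\lambda+n$ are even, so parity alone does not rule out $\nu_2(d_u+n)$ being strictly smaller than every other relevant valuation; your proposal ends with two candidate strategies (forcing some support eigenvalue to be $\equiv 2\pmod 4$, or a Fiedler-type bound producing a small eigenvalue in the support) but carries out neither, so the ``only if'' half of both items is not proved. You do correctly dispose of the sub-case $d_u\in\sigma_u(G)$ via Theorem \ref{sc}(2) applied to $G\vee H$. For comparison, the paper's own proof declares both directions ``immediate from Theorem \ref{joins}'' and only adds a parity remark showing that the disconnected branch of Theorem \ref{joins} cannot apply when $n$ and $d_u+n$ are both odd; it offers no argument for precisely the residual case you isolate, namely $d_u\notin\sigma_u(G)$ with every element of $\sigma_u(G\vee H)\setminus\{0\}$ of strictly larger $2$-adic valuation than $d_u+n$. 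So you have identified a genuine lacuna in the stated proof rather than overlooked a routine step, but as it stands your proposal does not prove the corollary: to finish, you must exhibit, under the uniform-parity hypotheses, a nonzero element of $\sigma_u(G\vee H)$ whose $2$-adic valuation is at most $\nu_2(d_u+n)$ whenever $d_u$ has the wrong parity, or else weaken the statement to the sufficiency direction only.
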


\begin{proof}
The first statements in 1 and 2 are immediate from Theorem \ref{joins}(1), while the second statement in 2 is immediate from Theorem \ref{joins}(2). Now, if the second statement in 1 holds, then largest power of two dividing $n$ is bigger than that of $d_u+n$ by Theorem \ref{joins}(2), which is a contradiction because both $n$ and $d_u+n$ are odd in this case. Thus, 1 and 2 hold.
\end{proof}

For a Hadamard diagonalizable graph $G$, $\sigma(G)\cup\{m\}$ consists of even integers. Thus, Corollary \ref{joincor} gives us the following result.

\begin{cor}
If $G$ is a $d$-regular Hadamard diagonalizable graph, then LPST occurs in $\up{2}(G\vee H)$ between $(0,u)$ and $(1,u)$ if and only if $d$ is  odd and $n$ is even. In this case, LPST occurs between $(0,u)$ and $(1,u)$ for any vertex $u$ of $G$.
\end{cor}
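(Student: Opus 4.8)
The plan is to reduce the statement to Corollary~\ref{joincor} together with the structural facts about Hadamard diagonalizable graphs, splitting into two cases according to the parity of $n$. First I would record what is known: a Hadamard diagonalizable graph $G$ on $m$ vertices is $d$-regular, satisfies $m\equiv 0\pmod 4$, and has all its Laplacian eigenvalues even integers. In particular $m$ is even, $\sigma_u(G)\subset\Zl$ consists of even integers for every vertex $u$, and $d$-regularity gives $d_u=d$ for all $u$. This last point will immediately yield the final sentence of the corollary, since the eventual LPST criterion will depend only on $d$ and $n$ and not on the choice of $u$.

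Next I would dispose of the case where $n$ is even. Here $\sigma(G)\cup\{m,n\}$ consists entirely of even integers, so Corollary~\ref{joincor}(2) applies directly and shows that LPST occurs in $\up{2}(G\vee H)$ between $(0,u)$ and $(1,u)$ if and only if $d_u=d$ is odd. This settles both directions of the equivalence within the case $n$ even.

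The case $n$ odd is where the real work lies, because $\sigma(G)\cup\{m,n\}$ now contains both even integers (from $\sigma(G)$ and $m$) and an odd integer ($n$), so Corollary~\ref{joincor} no longer applies; here I would argue directly that no LPST can occur. Since strong cospectrality of $(0,u)$ and $(1,u)$ is necessary for LPST, I may assume it holds, which is exactly the hypothesis needed to apply Theorem~\ref{th2} to the blow-up $\up{2}(G\vee H)$ with base graph $G\vee H$; the relevant degree is then $d+n$ and the relevant support is $\sigma_u(G\vee H)$. Using the join description of the eigenvalue support (as in the proof of Theorem~\ref{joins}), $\sigma_u(G\vee H)$ contains $\lambda+n$ for every nonzero $\lambda\in\sigma_u(G)$. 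Such a $\lambda$ is a positive even integer and $n$ is odd, so $\lambda+n$ is a positive odd element of the support and hence $\nu_2(\lambda+n)=0$. But Theorem~\ref{th2}(4) requires $\nu_2(d+n)<\nu_2(\mu)$ for every $\mu\in\sigma_u(G\vee H)\setminus\{0\}$, which is impossible for $\mu=\lambda+n$ since $\nu_2(d+n)\geq 0$. Therefore no LPST occurs when $n$ is odd.

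Combining the two cases gives LPST if and only if $n$ is even and $d$ is odd, and $d$-regularity makes this condition independent of $u$, which yields the ``for any vertex $u$'' clause. The main obstacle I anticipate is the $n$ odd case: one must ensure a nonzero eigenvalue genuinely lies in $\sigma_u(G)$, which holds whenever $u$ is non-isolated, hence whenever $d\geq 1$; the degenerate empty-graph case $d=0$ is handled separately, since then $d$ is even and $u$ is isolated, so Theorem~\ref{joins} already forbids LPST. One must also use the necessity of strong cospectrality to legitimately invoke Theorem~\ref{th2} without separately verifying its hypothesis $d+n\notin\sigma_u(G\vee H)$.
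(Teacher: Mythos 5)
Your proposal is correct, and in the $n$ even case it coincides exactly with the paper's one-line proof, which simply observes that $\sigma(G)\cup\{m\}$ consists of even integers (since $G$ is Hadamard diagonalizable, so $m\equiv 0\pmod 4$ and all Laplacian eigenvalues are even) and then cites Corollary \ref{joincor}(2). Where you genuinely diverge is the $n$ odd case: neither clause of Corollary \ref{joincor} applies there, because $\sigma(G)\cup\{m,n\}$ then mixes even and odd integers, so the paper's citation leaves that direction implicit. Your direct argument fills this in cleanly: for a non-isolated $u$ there is a nonzero even $\lambda\in\sigma_u(G)$, so $\lambda+n$ is odd and $\nu_2(\lambda+n)=0$, which can never exceed $\nu_2(d+n)\geq 0$, so Theorem \ref{th2}(4) (equivalently, Theorem \ref{joins}) rules out LPST; and the degenerate $d=0$ case is excluded separately since an isolated $u$ already fails strong cospectrality by Theorem \ref{joins}. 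Your version is thus slightly longer but more complete than the paper's, at the cost of re-deriving from Theorem \ref{th2} what the paper intends the reader to extract from Theorem \ref{joins}; the final ``for any vertex $u$'' clause follows in both treatments from $d$-regularity making the criterion independent of $u$.
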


In the next example, $G\cup H$ denotes the disjoint union of two graphs $G$ and $H$.

\begin{exm}
Consider the graph $G=O_1\vee (K_2\cup K_2)$. Then $\sigma(G)=\{0,3,5\}$ and each vertex of $G$ has even degree. Since $G$ is connected, Corollary \ref{joincor} implies that for all odd $n$, LPST occurs in $\up{2}(G\vee H)$ between $(0,u)$ and $(1,u)$ for any vertex $u$ of $G$. In particular, if $H=O_1$, then $\up{2}(G\vee H)$ admits LPST (see Figure \ref{ex}).
\end{exm}

If $G=K_1$ in Theorem \ref{joins}(1) and $V(K_1)=\{u\}$, then $m=1$ and $\sigma_{u}(G\vee H)=\{0,n+1\}$. Since  $\nu_2(n+1)> \nu_2( d_u+n)=\nu_2(n)$ if and only if $n$ is odd, we get the following result for blow-ups of cones.

\begin{figure}[h]
\centering
\begin{tikzpicture}[scale=.3,auto=left]
\tikzstyle{every node}=[draw, circle, thick, fill=black!0, scale=0.24]
 \node (0) at (-2,-1){\LARGE $\ a\ $};
 \node (1) at (-2,3){\LARGE $\ b\ $};
 \node (2) at (0.5,-1){\LARGE $\ c\ $};
 \node (3) at (5,-1){\LARGE $\ d\ $};
 \node (4) at (5,3){\LARGE $\ e\ $}; 
 \node (5) at (2.5,3){\LARGE $\ f\ $}; 

 \node (10) at (14.6,-2.4){\LARGE$(0,a)$};
 \node (11) at (14.6,4.4){\LARGE$(0,b)$};
 \node (12) at (19.8,-2.4){\LARGE$(0,c)$};
 \node (13) at (27.3,-2.4){\LARGE$(0,d)$};
 \node (14) at (27.3,4.4){\LARGE$(0,e)$}; 
 \node (15) at (22.15,4.4){\LARGE$(0,f)$}; 
 \node (101) at (10,-4.5){\LARGE$(1,a)$};
 \node (111) at (10,6.3){\LARGE$(1,b)$};
 \node (121) at (20.7,-6.2){\LARGE$(1,c)$};
 \node (131) at (31.5,-4.5){\LARGE$(1,d)$};
 \node (141) at (31.5,6.3){\LARGE$(1,e)$}; 
 \node (151) at (21.3,7.7){\LARGE$(1,f)$};

\draw[thick,black!70] (0)--(2);
\draw[thick,black!70] (0)--(1);
\draw[thick,black!70] (1)--(2);
\draw[thick,black!70] (3)--(2);
\draw[thick,black!70] (3)--(4);
\draw[thick,black!70] (4)--(2);
\draw[thick,black!70] (5)--(0);
\draw[thick,black!70] (5)--(1);
\draw[thick,black!70] (5)--(3);
\draw[thick,black!70] (5)--(4);
\draw[thick,black!70] (5)--(2);

\draw[thick,black!70] (10)--(12);
\draw[thick,black!70] (10)--(11);
\draw[thick,black!70] (11)--(12);
\draw[thick,black!70] (13)--(12);
\draw[thick,black!70] (13)--(14);
\draw[thick,black!70] (14)--(12);
\draw[thick,black!70] (15)--(10);
\draw[thick,black!70] (15)--(11);
\draw[thick,black!70] (15)--(13);
\draw[thick,black!70] (15)--(14);
\draw[thick,black!70] (15)--(12);

\draw[thick,black!70] (101)--(121);
\draw[thick,black!70] (101)--(111);
\draw[thick,black!70] (111)--(121);
\draw[thick,black!70] (131)--(121);
\draw[thick,black!70] (131)--(141);
\draw[thick,black!70] (141)--(121);
\draw[thick,black!70] (151)--(101);
\draw[thick,black!70] (151)--(111);
\draw[thick,black!70] (151)--(131);
\draw[thick,black!70] (151)--(141);
\draw[thick,black!70] (151)--(121);

\draw[thick,black!70] (10)--(121);
\draw[thick,black!70] (10)--(111);
\draw[thick,black!70] (11)--(121);
\draw[thick,black!70] (13)--(121);
\draw[thick,black!70] (13)--(141);
\draw[thick,black!70] (14)--(121);
\draw[thick,black!70] (15)--(101);
\draw[thick,black!70] (15)--(111);
\draw[thick,black!70] (15)--(131);
\draw[thick,black!70] (15)--(141);
\draw[thick,black!70] (15)--(121);

\draw[thick,black!70] (101)--(12);
\draw[thick,black!70] (101)--(11);
\draw[thick,black!70] (111)--(12);
\draw[thick,black!70] (131)--(12);
\draw[thick,black!70] (131)--(14);
\draw[thick,black!70] (141)--(12);
\draw[thick,black!70] (151)--(10);
\draw[thick,black!70] (151)--(11);
\draw[thick,black!70] (151)--(13);
\draw[thick,black!70] (151)--(14);
\draw[thick,black!70] (151)--(12);

 \end{tikzpicture}
 \caption{The graph $G\vee H$ where $G=O_1\vee (K_2\cup K_2)$ and $H=O_1$ (left); the graph $\up{2}(G\vee H)$ with LPST between vertices $(0,u)$ and $(1,u)$ for any vertex $u$ of $G\vee H$ (right)}
 \label{ex}
 \end{figure}
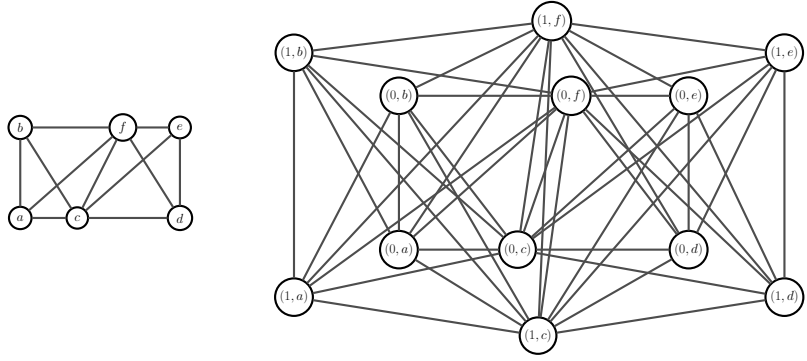
 
\begin{cor}
\label{cone}
If $G=K_{1}\vee H$ and $V(K_1)=\{u\}$, then LPST occurs in $\up{2}G$ at $\frac{\pi}{2}$  between $(0,u)$ and $(1,u)$ if and only if $n$ odd.
\end{cor}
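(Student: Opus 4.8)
The plan is to read off Corollary \ref{cone} as the degenerate case of Theorem \ref{joins}(1) in which the smaller factor of the join is a single vertex. Write $V(K_1) = \{u\}$, so that in the notation of Theorem \ref{joins} we take the first factor to be $K_1$ (hence $m = 1$) and the second factor to be $H$ (hence $n$ vertices). The Laplacian of $K_1$ is the $1 \times 1$ zero matrix, so $\sigma_u(K_1) = \{0\}$ and the degree of $u$ in $K_1$ is $0$. Since $K_1$ is (trivially) connected, we are in case (1) of Theorem \ref{joins}.

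First I would assemble the two pieces of spectral data that case (1) requires. By \cite[Lemma 3]{kirkland2023quantum} the eigenvalue support of $u$ in the cone is $\sigma_u(K_1 \vee H) = \{\lambda + n : \lambda \in \sigma_u(K_1) \setminus \{0\}\} \cup \{0, m+n\} = \{0, n+1\}$, the first set being empty; and the degree of $u$ in $K_1 \vee H$ is $d_u + n = n$. For $n \geq 1$ we have $n \notin \{0, n+1\}$, so the non-degeneracy requirement $d_u + n \notin \sigma_u(K_1 \vee H)$ that underlies the application of Theorem \ref{th2} is satisfied.

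Next I would specialize the three $\nu_2$-conditions of Theorem \ref{joins}(1). The requirement $\sigma_u(K_1) \subset \Zl$ is automatic; the requirement $\nu_2(\lambda + n) > \nu_2(d_u + n)$ for all $\lambda \in \sigma_u(K_1) \setminus \{0\}$ is vacuous, since that set is empty; and the one surviving requirement is $\nu_2(m+n) > \nu_2(d_u + n)$, that is, $\nu_2(n+1) > \nu_2(n)$. As $n$ and $n+1$ are consecutive integers, exactly one is odd: if $n$ is odd then $\nu_2(n) = 0 < \nu_2(n+1)$, whereas if $n$ is even then $\nu_2(n) \geq 1 > 0 = \nu_2(n+1)$. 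Thus the condition holds precisely when $n$ is odd, giving the claimed equivalence.

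For the minimum time I would feed the same data into the time formula of Theorem \ref{joins}(1): with $\sigma_u(K_1) \setminus \{0\} = \emptyset$, $d_u = 0$ and $m = 1$, the set $S = \{\lambda + n : \lambda \in (\sigma_u(K_1) \setminus \{0\}) \cup \{d_u, m\}\}$ becomes $\{n, n+1\}$, so $h = \gcd(n, n+1) = 1$ and $\tau = \frac{\pi}{2h} = \frac{\pi}{2}$. There is no genuine obstacle in this corollary -- the entire argument is bookkeeping in the degenerate case -- and the single point that needs care is the observation that $\sigma_u(K_1) \setminus \{0\}$ is empty, which is exactly what collapses the list of conditions in Theorem \ref{joins}(1) down to the lone comparison $\nu_2(n+1) > \nu_2(n)$ between consecutive integers.
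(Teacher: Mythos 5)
Your proposal is correct and follows exactly the paper's route: the corollary is obtained by specializing Theorem \ref{joins}(1) to $G=K_1$, where $\sigma_u(K_1\vee H)=\{0,n+1\}$ and the conditions collapse to $\nu_2(n+1)>\nu_2(n)$, which holds precisely when $n$ is odd. Your additional checks (that the relevant non-degeneracy condition $d_u+n\notin\sigma_u(K_1\vee H)$ holds even though $d_u=0\in\sigma_u(K_1)$, and that $h=\gcd(n,n+1)=1$ gives $\tau=\frac{\pi}{2}$) match the treatment of the $G=K_1$ case inside the proof of Theorem \ref{joins} and the time formula there.
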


Letting $H=O_n$ in Corollary \ref{cone} yields the following result.

\begin{cor}
\label{star}
$\up{2}K_{1,n}$ has LPST between vertices of degree $2m$ if and only if $n$ is odd.
\end{cor}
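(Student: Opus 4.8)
The plan is to read off Corollary~\ref{star} as the instance $H=O_n$ of Corollary~\ref{cone}. First I would observe that the edgeless graph $O_n$ on $n$ vertices satisfies $K_1\vee O_n=K_{1,n}$, so the apex $u$ of the cone is exactly the center of the star. In $K_{1,n}$ the center is the unique vertex of degree $n$ and each of the $n$ leaves has degree $1$; hence in $\up{2}K_{1,n}$ the copies $(0,u)$ and $(1,u)$ are the two vertices of degree $2n$, whereas every leaf copy has degree $2$ (for $n\geq 2$ this distinguishes them). Thus the vertices of degree $2n$ are precisely the two copies of the center, and the task reduces to deciding when LPST occurs between them.

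With this identification, I would apply Corollary~\ref{cone} directly: since $O_n$ has $n$ vertices, that corollary asserts that $\up{2}(K_1\vee O_n)=\up{2}K_{1,n}$ admits LPST between $(0,u)$ and $(1,u)$ at time $\frac{\pi}{2}$ if and only if $n$ is odd, which is exactly the claim. A self-contained derivation is equally short: the Laplacian spectrum of $K_{1,n}$ is $\{0,1,n+1\}$ (with $1$ of multiplicity $n-1$), and since the $1$-eigenvectors vanish at the center we get $\sigma_u(K_{1,n})=\{0,n+1\}$ with $d_u=n\notin\sigma_u(K_{1,n})$. Theorem~\ref{th2}(4) then gives LPST precisely when $\nu_2(n)<\nu_2(n+1)$; as $n$ and $n+1$ are coprime and of opposite parity, this holds exactly when $n$ is odd, and the minimum time $\frac{\pi}{2h}$ with $h=\operatorname{gcd}(n,n+1)=1$ equals $\frac{\pi}{2}$.

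For completeness I would note that no other LPST between copies of a single vertex can occur: a leaf $w$ has odd degree $d_w=1$ and support $\sigma_w(K_{1,n})=\{0,1,n+1\}$, which contains the odd integer $1$, so Corollary~\ref{d} rules out LPST between $(0,w)$ and $(1,w)$. I do not anticipate any real obstacle, since the statement is a direct specialization of Corollary~\ref{cone}; the only step needing slight care is the support computation $\sigma_u(K_{1,n})=\{0,n+1\}$, which amounts to checking that the center lies outside the $1$-eigenspace of the Laplacian, and this is immediate once one observes that those eigenvectors are supported entirely on the leaves.
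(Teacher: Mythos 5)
Your proposal is correct and follows exactly the paper's route: the corollary is obtained by setting $H=O_n$ in Corollary \ref{cone}, identifying $K_1\vee O_n=K_{1,n}$ with the apex as the star's center. Your additional self-contained check via Theorem \ref{th2}(4) (using $\sigma_u(K_{1,n})=\{0,n+1\}$ and $d_u=n$) and the exclusion of leaf pairs are sound supplements but not a different method.
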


\section{Paths}\label{sec:path}

A landmark result of Coutinho and Liu states that $K_2$ and $P_3$ are the only trees that admit Laplacian perfect state transfer \cite{cou0}. Despite this fact, Corollary \ref{star} implies that there are infinite families of trees whose blow-ups admit LPST. This fact motivates us to further examine pretty good state transfer on blow-ups of trees, starting with paths in this section. 

The Laplacian eigenvalues of $P_n$ are
\begin{center}
$\theta_j=2\left(1-\cos\left(\frac{j\pi}{n}\right)\right)\quad j=0,1,\ldots,n-1$
\end{center}
Moreover, for every vertex $u$ of $P_n$, \cite[Lemma 4.4.1]{Bommel2019} yields
\begin{equation}
\label{supp}
    \sigma_u(P_n)=\{0\}\cup\{\theta_j:2n\nmid (2u-1)j\}
\end{equation}

\begin{thm}
\label{path}
$\up{2}P_n$ does not admit LPGST.
\end{thm}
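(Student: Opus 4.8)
The plan is to turn the problem into a purely number-theoretic statement about the Laplacian eigenvalues $\theta_j=2\ob{1-\cos\ob{j\pi/n}}$ and then rule out the relevant phase relations. Since strong cospectrality is necessary for LPGST \cite[Lemma 13.1]{god1}, Theorem \ref{sc} shows that any LPGST in $\up{2}P_n$ must occur between a pair $(0,u)$ and $(1,u)$ with $d_u\notin\sigma_u(P_n)$. It therefore suffices to prove that for every vertex $u$ of $P_n$ with $d_u\notin\sigma_u(P_n)$, the condition of Theorem \ref{th1} fails. Concretely, I would show the existence of integers $m_j$, indexed by the $\theta_j\in\sigma_u(P_n)$, and an \emph{odd} integer $k$ with $\sum_j m_j\theta_j=k\,d_u$. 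Any such relation is fatal to LPGST: feeding it into Theorem \ref{th1}(2) gives $1=\lim_k\exp\ob{-2i\ob{\sum_j m_j\theta_j}\tau_k}=\lim_k\ob{\exp\ob{-2id_u\tau_k}}^{k}=(-1)^k=-1$, a contradiction.

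I would organize the search for this relation by the degree $d_u\in\{1,2\}$, first discarding the vertices where strong cospectrality already fails. Using \eqref{supp} together with $\theta_j=2\ob{1-\cos\ob{j\pi/n}}$: an endpoint has full support $\sigma_u(P_n)=\sigma(P_n)$, so $d_u=1\in\sigma_u(P_n)$ exactly when $1\in\sigma(P_n)$, i.e.\ when $3\mid n$; and for an internal vertex, \eqref{supp} shows $2=\theta_{n/2}\in\sigma_u(P_n)$ precisely when $n$ is even, so $d_u=2\in\sigma_u(P_n)$ there iff $n$ is even. This already removes every internal vertex when $n$ is even and both endpoints when $3\mid n$, leaving only the endpoints with $3\nmid n$ and the internal vertices with $n$ odd to be handled by constructing an odd relation.

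The construction itself is the crux. The basic ingredients are the reflection identity $\theta_j+\theta_{n-j}=4$ (from $\cos\ob{(n-j)\pi/n}=-\cos\ob{j\pi/n}$), the value $\theta_0=0$, and the trace identity $\sum_{j=0}^{n-1}\theta_j=2(n-1)$. The pairing $\theta_j+\theta_{n-j}=4$ only ever contributes multiples of $4$, so it cannot by itself create an odd $k$; the odd contribution must be extracted from the \emph{asymmetry of the support} recorded by \eqref{supp} — within a pair $\{j,n-j\}$ at most one index is excluded from $\sigma_u(P_n)$ — together with the odd defect coming from the term involving $\theta_0$ and $d_u$. Writing $\theta_j=2-\ob{\zeta^j+\zeta^{-j}}$ with $\zeta=e^{i\pi/n}$ a primitive $2n$-th root of unity, the existence of an odd $k$ with $\sum_j m_j\theta_j=k\,d_u$ is equivalent to the existence of an odd rational integer in the $\Zl$-span of $\cb{\zeta^j+\zeta^{-j}:\theta_j\in\sigma_u(P_n)}$. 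I expect this $2$-adic/cyclotomic membership question to be the main obstacle: when $2$ is highly ramified in $\Zl[\zeta_{2n}]$ — most severely when $n$ is a power of $2$ — the relevant span tends to meet $\Zl$ only in even integers, so this is exactly where the argument must be most careful, whereas an odd prime factor of $n$ makes odd integers readily available (for instance $\theta_1-\theta_2=-1$ when $n=5$). I would first settle the clean endpoint case $3\nmid n$, $d_u=1$ (full support), fixing the combinatorics of which residues survive in \eqref{supp}, and then transport the construction to the internal vertices with $n$ odd using the same support description.
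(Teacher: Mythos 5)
Your reduction is sound and is in substance the same one the paper uses: strong cospectrality (Theorem \ref{sc}) discards the internal vertices for even $n$ (and, as you note, the endpoints when $3\mid n$), and for the surviving vertices LPGST is killed by exhibiting an integer relation $\sum_j m_j\theta_j=k\,d_u$ with $k$ odd, which is equivalent to the failure of Theorem \ref{th1}(4) since $\theta_0=0$ lets you normalize $\sum_j m_j=k$. The problem is that you stop exactly at what you yourself call the crux: no odd relation is ever actually produced. The paper's proof is essentially nothing \emph{but} these constructions --- for odd $n$ it derives $\sum_{j=1}^{(n-1)/2}(-1)^{j-1}\cos(j\pi/n)=\tfrac12$ from the identity $\sum_{j=0}^{n-1}(-1)^j\cos(j\pi/n)=0$ and reads off explicit $m_j\in\{0,\pm1,\pm2\}$ (with separate treatments for the endpoints, for interior $u\neq\tfrac{n+1}{2}$ via a residue-class choice of $m_j$ modulo $2p$ for a prime $p\mid n$ adapted from van Bommel, and for the middle vertex $u=\tfrac{n+1}{2}$ whose support is $\{0\}\cup\{\theta_j:j\text{ odd}\}$). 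Reformulating the problem as ``is there an odd rational integer in the $\Zl$-span of $\{\zeta^j+\zeta^{-j}\}$'' is a fair description of the difficulty, but it is not progress toward resolving it; also note that this equivalence as you state it is only correct for $d_u=1$ --- for $d_u=2$ the parity bookkeeping between $s=\sum_j m_j(\zeta^j+\zeta^{-j})$ and $k$ is different.

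More importantly, the obstacle you flag for $n$ a power of two is not a place where ``the argument must be most careful''; it is a place where the proposed relation provably does not exist. Take $n=4$ and $u$ an endpoint: $\sigma_u(P_4)=\{0,\,2-\sqrt2,\,2,\,2+\sqrt2\}$ and $d_u=1\notin\sigma_u(P_4)$, so the pair is strongly cospectral. Any rational integer in the $\Zl$-span of these eigenvalues forces the coefficients of $2\pm\sqrt2$ to be equal, and then the value is $2(2m_1+m_2)$, which is always even. So there is no odd relation, and by the \emph{equivalence} in Theorem \ref{th1} the absence of such a relation asserts that LPGST \emph{does} occur --- your strategy cannot close this case, and you would need to confront that head-on rather than defer it. (You should be aware that the paper's own treatment of the even-$n$ endpoint case trips at exactly this point: with $m_1=m_{n-1}=1=-m_{n/2}$ the sum $\sum_j m_j(\theta_j-1)$ equals $(\theta_1-1)+(\theta_{n-1}-1)-(\theta_{n/2}-1)=1$, not the displayed $(\theta_1-1)+(\theta_{n-1}-1)-\theta_{n/2}=0$.) So the gap in your proposal is twofold: the constructive heart of the argument is missing for every case you retain, and in at least the $n\in\{4,8\}$ endpoint cases no construction of the required form can exist, so the plan as written cannot succeed there.
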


\begin{proof}
Let $u$ be a vertex of $P_n$. First, let $u\in\{1,n\}$. Then $d_u=1$ and $\sigma_u(P_n)=\sigma(P_n)$. Suppose $n$ is even so that $\theta_{\frac{n}{2}}=2\in \sigma(P_n)$. Letting $m_1=m_{n-1}=1=-m_{\frac{n}{2}}$ and $m_j=0$ otherwise, we get that $\sum_{j=1}^{n-1}m_j=1$ is odd, but
\begin{center}
$\displaystyle\sum_{j=0}^{n-1}m_j(\theta_j-1)=(\theta_1-1)+(\theta_{n-1}-1)-\theta_{\frac{n}{2}}
=0$.
\end{center}
Invoking Theorem \ref{th1}(4), we conclude that LPGST does not occur between $(0,u)$ and $(1,u)$ in $\up{2}P_n$. Now, suppose $n$ is odd. In this case, \cite[Lemma 4.3.2]{Bommel2019} states that
\begin{equation*}
\sum_{j=0}^{n-1}(-1)^{j}\cos\left(\frac{j\pi}{n}\right)=0.
\end{equation*}
Since $\cos(\frac{j\pi}{n})=\cos(\frac{(n-j)\pi}{n})$, the above expression gives us
\begin{equation}
\label{eqq}
\sum_{j=1}^{\frac{n-1}{2}}(-1)^{j-1}\cos\left(\frac{j\pi}{n}\right)=\frac{1}{2}.
\end{equation}
If $\frac{n-1}{2}$ is odd, then taking $m_j=(-1)^{j-1}$ for $j\in\{1,\ldots,\frac{n-1}{2}\}$ and $m_j=0$ otherwise yields $\sum_{j=1}^{\frac{n-1}{2}}m_j=\sum_{j=1}^{\frac{n-1}{2}}(-1)^{j-1}=1$ is odd, and making use of (\ref{eqq}) gives us
\begin{equation*}
\begin{split}
\sum_{j=1}^{\frac{n-1}{2}}m_j(\theta_j-1)&=\displaystyle\sum_{j=1}^{\frac{n-1}{2}}(-1)^{j-1}\left(1-2\cos\left(\frac{j\pi}{n}\right)\right)\\
&=\displaystyle\sum_{j=1}^{\frac{n-1}{2}}(-1)^{j-1}
-2\displaystyle\sum_{j=1}^{\frac{n-1}{2}}(-1)^{j-1}\cos\left(\frac{j\pi}{n}\right)=1-2(1/2)=0.
\end{split}
\end{equation*}
If $\frac{n-1}{2}$ is even, then taking $m_0=-1$, $m_j=(-1)^{j-1}$ for $j\in\{1,\ldots,\frac{n-1}{2}\}$ and $m_j=0$ otherwise yields $\sum_{j=0}^{\frac{n-1}{2}}m_j=-1$ is odd, and making use of (\ref{eqq}) give us
\begin{equation*}
\begin{split}
\sum_{j=0}^{\frac{n-1}{2}}m_j(\theta_j-1)&=m_0(\theta_0-1)+\sum_{j=1}^{\frac{n-1}{2}}m_j(\theta_j-1)=1+\displaystyle\sum_{j=1}^{\frac{n-1}{2}}(-1)^{j-1}\left(1-2\cos\left(\frac{j\pi}{n}\right)\right)\\
&=1+\displaystyle\sum_{j=1}^{\frac{n-1}{2}}(-1)^{j-1}
-2\displaystyle\sum_{j=1}^{\frac{n-1}{2}}(-1)^{j-1}\cos\left(\frac{j\pi}{n}\right)=1+0-2(1/2)=0.
\end{split}
\end{equation*}
Applying Theorem \ref{th1}(4), we get no
LPGST between $(0,u)$ and $(1,u)$ in $\up{2}P_n$.

Now, suppose $u\notin\{1,n\}$. Then $d_u=2\in\sigma_u(P_n)$ if and only if $j=\frac{n}{2}$. Suppose $n$ is even. Since $2u-1$ is odd for all $u\notin\{1,n\}$, we have $2n\nmid (2u-1)\frac{n}{2}$, and so by (\ref{supp}), we have $2\in \sigma_u(P_n)$ for all $u\notin\{1,n\}$. Invoking Theorem \ref{sc}(2), we get that $(0,u)$ and $(1,u)$ are not strongly cospectral in $\up{2}P_n$, and hence they cannot admit LPGST. Now, suppose $n$ is odd, so that $2\notin \sigma_u(P_n)$. We proceed with two subcases.

\vspace{0.1in}

\noindent \textbf{Case 1.} Let $u\neq \frac{n+1}{2}$. If $n$ is an odd prime, then $\sigma_u(P_n)=\sigma(P_n)$. In this case, the proof of the case when $u\in\{1,n\}$ applies. Hence, there is no LPGST between $(0,u)$ and $(1,u)$ in $\up{2}P_n$. Now, suppose $n$ is an odd composite number. We adapt the proof of Case 3 in \cite[Theorem 4.4.3]{Bommel2019}. Since $u\neq \frac{n+1}{2}$, there exists a prime factor $p$ of $n$ that divides $\frac{2n}{\operatorname{gcd}(2u-1,2n)}$. Moreover, if $k\not \equiv 0$ (mod $p$), then $\theta_k\in\sigma_u(P_n)$. Now, let $m_j=1$ if $j\equiv 1,p+2$ (mod $2p$), $m_j=-1$ if $j\equiv 2,p+1$ (mod $2p$) and $m_j=0$ otherwise. For $c\in\{1,2\}$, we get
\begin{equation*}
\sum_{j=0}^{n-1}m_j(\theta_j-2)=\sum_{{\ell}=0}^{\frac{n}{p}-1}(-1)^{\ell}(\theta_{c+\ell p}-2)=\sum_{{\ell}=0}^{\frac{n}{p}-1}(-1)^{\ell}\theta_{c+\ell p}-2\sum_{{\ell}=0}^{\frac{n}{p}-1}(-1)^{\ell}=2-2(1)=0.
\end{equation*}
Meanwhile, $\sum_{j=0}^{n-1}m_j=\sum_{{\ell}=0}^{\frac{r}{p}-2}(-1)^{\ell}=1$ is odd. Applying Theorem \ref{th1}(4), we get no
LPGST between $(0,u)$ and $(1,u)$ in $\up{2}P_n$.

\vspace{0.1in}

\noindent \textbf{Case 2.} Let $u=\frac{n+1}{2}$. Then $\sigma_u(P_n)=\{0\}\cup\{\theta_j:j\ \text{odd}\}$. Since $-\cos\left(\frac{j\pi}{n}\right)=\cos\left(\frac{(n-j)\pi}{n}\right)$ for all even $j\in\{1,\ldots,\frac{n-1}{2}\}$, (\ref{eqq}) gives us
\begin{equation}
\label{eq11}
\sum_{j\geq 1\ \text{odd}}^{n-1}\cos\left(\frac{j\pi}{n}\right)=\frac{1}{2}.
\end{equation}
Using (\ref{eq11}), we can write
\begin{equation*}
\sum_{j\geq 1\ \text{odd}}^{n-1}(\theta_{j}-2)=-2\sum_{j\geq 1\ \text{odd}}^{n-1}\cos\left(\frac{j\pi}{n}\right)=-1.
\end{equation*}
Now, let $m_j=2$ for all odd $j\in\{1,\ldots,n-1\}$ and $m_0=-1$. The above equation yields
\begin{equation*}
\begin{split}
\sum_{j=0}^{n-1}m_j(\theta_j-2)&=m_0(\theta_0-2)+\sum_{j\geq 1\ \text{odd}}^{n-1}m_j(\theta_j-2)=-1(-2)+2\sum_{j\geq 1\ \text{odd}}^{n-1}(\theta_{j}-2)=0.
\end{split}
\end{equation*}
Meanwhile, $\sum_{j=0}^{n-1}m_j=-1+2(\frac{n-1}{2})$ is odd. Applying Theorem \ref{th1}(4), we get no LPGST between $(0,u)$ and $(1,u)$ in $\up{2}P_n$.

Combining all cases above yields the conclusion.
\end{proof}

\section{Double stars}
A \textit{double star} graph $S_{k,l}$ is a tree resulting from attaching $k$ and $l$ pendent vertices to the vertices of $K_2$. Our goal is to characterize LPGST in blow-ups of double stars.

The characteristic polynomial of $S_{k,l}$ can be obtained using \cite[Proposition 1]{grone1990ordering} as \[\Phi(x)=x(x-1)^{k+l-2}p(x),\] where $p(x)=x(x-k-1)(x-l-1)-(x-1)(x-k-1)-(x-1)(x-l-1)$.

We now characterize LPGST in blow-ups of double stars.

\begin{thm}
\label{ds}
The following hold.
\begin{enumerate}
\item Let $v$ be a vertex in $S_{k,l}$ of degree $k+1$. There is LPGST in $\up{2}S_{k,l}$ between $(0,v)$ and $(1,v)$ if and only if $p(x)$ is irreducible over $\Ql$ and $\nu_2(k+1)=\nu_2(l+3)$.
\item Let $w$ be a leaf in $S_{k,l}$ attached to a vertex of degree $l+1$.
There is LPGST in $\up{2}S_{k,l}$ between $(0,w)$ and $(1,w)$ if and only if $l=1$, $k$ is odd, and either $p(x)$ is irreducible over $\Ql$, or $p(x)$ has a root that is an even integer.
\end{enumerate}
\end{thm}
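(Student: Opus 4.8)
The plan is to apply the LPGST criterion of Theorem \ref{th1}, whose hypothesis requires $d_u \notin \sigma_u(S_{k,l})$ and whose part (4) reduces LPGST (with the automatic phase factor $\gamma=1$) to a parity condition on the integer relations among the shifted eigenvalues $\{\lambda - d_u : \lambda \in \sigma_u(S_{k,l})\}$. Writing $\sigma_u = \{0\}\cup\{\lambda_1,\lambda_2,\lambda_3\}$ and $d=d_u$, a relation $\sum_j m_j(\lambda_j-d)=0$ rearranges to $m_1\lambda_1+m_2\lambda_2+m_3\lambda_3 = d\,S$ with $S=\sum_j m_j$, so Theorem \ref{th1}(4) becomes: \emph{every $S\in\Zl$ with $dS\in\Lambda:=\Zl\lambda_1+\Zl\lambda_2+\Zl\lambda_3$ is even}. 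Thus the proof splits into (a) pinning down the eigenvalue support and checking $d_u\notin\sigma_u$, and (b) analysing the lattice $\Lambda$.

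For the supports I would use $\Phi(x)=x(x-1)^{k+l-2}p(x)$ together with the characteristic polynomials of the vertex-deleted principal submatrices, since $\sigma_u$ is exactly the set of poles of $\phi(L-u,x)/\Phi(x)$. Deleting the degree-$(k+1)$ vertex $v$ gives $\phi(L-v,x)=(x-1)^{k+l-1}(x^2-(l+2)x+1)$, and a short $\gcd$ check (using $p(1)=kl\neq 0$ and that no root of $p$ solves $x^2-(l+2)x+1$) yields $\sigma_v=\{0\}\cup\{\text{roots of }p\}$. The key subsidiary identity is $p(k+1)=-k(k-l)$, so $d_v=k+1$ is a root of $p$ precisely when $k=l$; in that case $d_v\in\sigma_v$, whence Theorem \ref{sc}(2) rules out strong cospectrality and hence LPGST. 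I would reinforce this with the lemma that $p$ is reducible over $\Ql$ if and only if $k=l$: an integer root $r$ forces $kl=(r-1)^2(t-1)$ and $k+l=rt-2$ for a positive integer $t$, so $(k-l)^2=r^2(t-2)^2+4(r-1)(t-2)$, a quantity a short squeeze argument shows is not a perfect square unless $t=2$. This identifies the irreducibility hypothesis of part (1) with the condition $k\neq l$ that guarantees $d_v\notin\sigma_v$.

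For part (2), deleting a leaf $w$ adjacent to the degree-$(l+1)$ vertex exhibits the $l-1$ pendant-difference eigenvectors of eigenvalue $1$ with nonzero $w$-component, so $1\in\sigma_w$ whenever $l\ge 2$; since $d_w=1$, Theorem \ref{sc}(2) forces $l=1$, and for $l=1$ an analogous $\gcd$ computation gives $\sigma_w=\{0\}\cup\{\text{roots of }p\}$ with $1\notin\sigma_w$. Now I run the lattice analysis. When $p$ is irreducible the only $\Ql$-linear relation among $1,\lambda_1,\lambda_2,\lambda_3$ is the trace relation $\lambda_1+\lambda_2+\lambda_3=k+l+4$, so $m_1\lambda_1+m_2\lambda_2+m_3\lambda_3\in\Ql$ forces $m_1=m_2=m_3$; the set of admissible $S$ is then generated by $(k+l+4)/\gcd(k+1,l+3)$ for part (1) and by $k+l+4=k+5$ for part (2). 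A direct $2$-adic computation shows evenness of the generator is equivalent to $\nu_2(k+1)=\nu_2(l+3)$ in part (1) and to $k$ odd in part (2). For the reducible subcase of part (2) (with $l=1$), $p$ has an integer root $r$: an odd $r$ immediately produces an odd element of $\Lambda\cap\Zl$, destroying LPGST, whereas if $r$ is even, the elementary symmetric functions of $p$ (all even once $k$ is odd) force every element of $\Lambda\cap\Zl$ to be even, giving LPGST. This produces the ``$p$ irreducible or $p$ has an even integer root'' dichotomy.

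The main obstacle is the number-theoretic core: showing that for irreducible $p$ the trace relation is the \emph{only} $\Ql$-linear dependence among the roots, so that the relation lattice of Theorem \ref{th1}(4) has rank one, and then carrying out the $2$-adic bookkeeping cleanly. Tightly bound up with this are the reducibility lemma $p$ reducible $\iff k=l$ for part (1)---whose squeeze-between-squares argument is what links the algebraic hypothesis to the combinatorial condition $d_v\notin\sigma_v$---and, for part (2), the parity analysis of the symmetric functions, which must be organized to handle the ``one integer root'' and ``fully split'' subcases uniformly.
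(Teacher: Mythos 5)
Your proposal is correct, and its backbone coincides with the paper's: reduce to Theorem \ref{th1}(4) after checking $d_v\notin\sigma_v$ via Theorem \ref{sc}(2), use the trace identity $\lambda_1+\lambda_2+\lambda_3=k+l+4$ to show that for irreducible $p$ every integer relation forces $m_1=m_2=m_3$, and then do the $2$-adic parity bookkeeping (your ``admissible $S$ generated by $(k+l+4)/\gcd(k+1,l+3)$, resp.\ $k+5$'' is a tidier packaging of the explicit choices of $m_j$ the paper makes, and yields the same criteria $\nu_2(k+1)=\nu_2(l+3)$, resp.\ $k$ odd). You genuinely diverge in two places. First, you obtain the supports from the poles of $\phi(L-v,x)/\Phi(x)$ together with the identity $p(k+1)=-k(k-l)$, whereas the paper exhibits eigenvectors for $k=l$ and asserts the supports for $k\neq l$; your route is more systematic and treats $S_{1,1}$ uniformly. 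Second, and more substantially, you prove that $p$ is reducible over $\Ql$ if and only if $k=l$, via the squeeze on $(k-l)^2=r^2(t-2)^2+4(r-1)(t-2)$ (note the $t=1$ case also gives a perfect square but is excluded because it forces $kl=0$). The paper never establishes this; it instead treats ``$k\neq l$ and $p$ has exactly one integer root'' as a live case and rules out LPGST there by constructing explicit odd-sum relations. Your lemma shows that case is vacuous, which shortens the case analysis and makes transparent why the irreducibility hypothesis in part (1) is exactly the condition $k\neq l$ (equivalently $d_v\notin\sigma_v$); the price is one extra Diophantine lemma. The one step you flag but defer---that for irreducible $p$ the trace relation is the \emph{only} $\Ql$-linear dependence among $1,\lambda_1,\lambda_2,\lambda_3$---is precisely the step the paper carries out with the minimal-polynomial argument ($\lambda_2=c\lambda_3+d$ forces $p(x)$ and $c^{-3}p(cx+d)$ to coincide, hence $c=1$, $d=0$, contradicting distinctness of the roots), so it is a deferred standard argument rather than a gap.
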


\begin{proof}
We prove (1). First, suppose $k=l$. In this case, $p(x)$ has $x-(k+1)$ as a linear factor. It is easy to check that $\up{2}S_{1,1}$, the blowup of $P_4,$ does not exhibit LPGST. For $k\geq 2$, we have $d_u=1$ if $u$ is a leaf, while $d_u=k+1$ otherwise. Now, 1 and $k+1$ are eigenvalues of $L(S_{k,k})$ with eigenvectors $\e_u-\e_v$ and $\vl$, respectively, where $u$ and $v$ are leaves attached to the same vertex, and $\vl\e_w^T=1$ whenever $w$ is a leaf and $\vl\e_w^T=-k$ otherwise. Thus, for all $k\geq 2,$
$d_u\in\sigma_u\ob{S_{k,k}}$ for every vertex $u$ of $S_{k,k}$, and so there are no strongly cospectral pairs of vertices in $\up{2}S_{k,k}$ by Theorem \ref{sc}(2). Next, consider the case $k\neq l$. Then $k+1$ and $l+1$ are not eigenvalues of $S_{k,l}$, and all eigenvalues except $1$ belong to the support of the vertex $v$ of degree $k+1$. Thus, vertices $(0,v)$ and $(1,v)$ are strongly cospectral in $\up{2}S_{k,l}$, and so we may apply Theorem \ref{th1}. Since the sum of eigenvalues of $L(X)$ is equal to its trace, we obtain $(k+l-2)(1)+\lambda_1+\lambda_2+\lambda_3=2(k+l+1)$. Equivalently,
$$\lambda_1+\lambda_2+\lambda_3=k+l+4.$$ The assumption in Theorem \ref{th1}(4) holds if and only if $$-m_0(k+1)+m_1(\lambda_1-(k+1))+m_2(\lambda_2-(k+1))+m_3(\lambda_3-(k+1))=0$$ for some integers $m_0,m_1,m_2,m_3$. Combining the above two equations gives us
\begin{equation}
\label{pgstskl}
(m_2-m_1)\lambda_2+(m_3-m_1)\lambda_3=(m_0+m_1+m_2+m_3)(k+1)-m_1(k+l+4)
\end{equation}
We now proceed with two cases: when $p(x)$ has exactly one integer eigenvalue and when $p(x)$ has no integer eigenvalue (equivalently, it is irreducible). Note that $p(x)$ cannot have all integer eigenvalues, for this would imply  that $S_{k,\ell}$ is Laplacian integral, a contradiction to the fact that the stars are the only family of trees that are Laplacian integral.
\\ \textbf{Case 1}. Suppose $p(x)$ irreducible. If $m_2-m_1\neq 0$ but $m_3-m_1=0$, then equation (\ref{pgstskl}) implies that $\lambda_2$ is an integer, a contradiction. Similarly for the case $m_2-m_1=0$ but $m_3-m_1\neq 0$. Now, if $m_2-m_1\neq 0$ and $m_3-m_1\neq 0$, then equation (\ref{pgstskl}) implies that $\lambda_2=c\lambda_3+d$ for some rational numbers $c,d$ with $c\neq 0$. Thus, $\lambda_3$ is a root of $p(x)$ and $p(cx+d)$, which are both monic cubic polynomials with integer coefficients. Since $\lambda_3$ is algebraic over $\Ql$ and its minimal polynomial over $\Ql$ is unique, it follows that $p(x)=p(cx+d)$. That is, $c=1$ and $d=0$. But this implies that $\lambda_2=\lambda_3$, a contradiction to the fact that $p(x)$ is irreducible. The only case left is when $m_2-m_1=0$ and $m_3-m_1=0$. This yields $m_1=m_2=m_3$, and so we may rewrite equation (\ref{pgstskl}) as 
\begin{equation}
\label{eq}
m_0(k+1)/g=m_1(l-2k+1)/g,
\end{equation}
where $g=\operatorname{gcd}(k+1,l-2k+1)=\operatorname{gcd}(k+1,l+3)$. As $m_1=m_2=m_3$, we get that $m_0+m_1+m_2+m_3$ is even if and only if $m_0+m_1$ is even. If $\nu_2(k+1)=\nu_2(l+3)$, then the $m_0$ and $m_1$ that satisfies equation (\ref{eq}) must have the same parity, in which case $m_0+m_1$ is  even. Thus, we get LPGST between $(0,v)$ and $(1,v)$ in $\up{2}S_{k,l}$ by Theorem \ref{th1}(4). However, if $\nu_2(k+1)>\nu_2(l+3)$, then we may choose $m_0=\frac{l-2k+1}{g}$ and $m_1=\frac{k+1}{g}$, then $m_0$ is odd while $m_1$ is even, and so $m_0+m_1$ is odd. In this case, we do not get LPGST between $(0,v)$ and $(1,v)$ in $\up{2}S_{k,l}$ by Theorem \ref{th1}(4). We may argue similarly for the case $\nu_2(k+1)<\nu_2(l+3)$.\\ \textbf{Case 2}. Suppose $p(x)$ has exactly one integer eigenvalue, say $\lambda_2$. Then equation (\ref{pgstskl}) implies that  $m_1=m_3$ (otherwise, $\lambda_3$ is also an integer, a contradiction). Thus, $m_0+m_1+m_2+m_3$ is even if and only if $m_0+m_2$ is even. Moreover, we may rewrite equation (\ref{pgstskl}) as
\begin{equation}
\label{eq1}
(m_2-m_1)\lambda_2=(m_0+m_1+m_2)(k+1)-m_1(l+3).
\end{equation}
First, suppose $\nu_2(k+1)\neq \nu_2(l+3)$.
Set $m_1=m_2$ so that $m_1\frac{l+3}{g}=(m_0+2m_1)\frac{k+1}{g}$ by equation (\ref{eq1}), where $g=\operatorname{gcd}(k+1,l+3)$. Since $\nu_2(k+1)\neq \nu_2(l+3)$, letting $m_0=\frac{l+3-2(k+1)}{g}$ and $m_1=\frac{k+1}{g}$, we get that $m_0$ and $m_1$ satisfy equation (\ref{eq1}) and $m_0+m_1=m_0+m_2$ is odd. Now, suppose $\nu_2(l+3)=\nu_2(k+1)$. We have two subcases. Suppose $\nu_2(\lambda_2)\leq \nu_2(k+1)$. Setting $m_1=0$, we obtain $m_2\frac{\lambda_2}{g}=(m_0+m_2)\frac{k+1}{g}$ from equation (\ref{eq1}), where $g=\operatorname{gcd}(\lambda_2,k+1)$. Letting 
$m_0=\frac{\lambda_2-k-1}{g}$ and $m_2=\frac{k+1}{g}$, we get that $m_0$ and $m_2$ satisfy equation (\ref{eq1}) and $m_0+m_2$ is odd. For the case that $\nu_2(\lambda_2)>\nu_2(k+1)$, set $m_2=0$ so that $m_1\frac{l+3-(k+1)-\lambda_2}{g}=m_0\frac{k+1}{g}$ by equation (\ref{eq1}), where $g=\operatorname{gcd}(\lambda_2,k+1,l+3)$. Letting 
$m_0=\frac{l+3-(k+1)-\lambda_2}{g}$ and $m_1=\frac{k+1}{g}$, we get that $m_0$ and $m_1$ satisfy equation (\ref{eq1}) and $m_0$ is odd, and so $m_0+m_2$ is odd. In any case, we see that we can find integers $m_0,m_1,m_2,m_3$ satisfying the assumption of Theorem \ref{th1}(4) whose sum is odd. Hence, there is no LPGST between $(0,v)$ and $(1,v)$ in $\up{2}S_{k,l}$.
Combining the two cases above establishes 1.

We now prove 2. If $l>1$, then a leaf $w$ attached to a vertex of degree $l+1$ has at least one twin $u$. Thus, vertices $(0,w)$, $(1,w)$, $(0,u)$ and $(1,u)$ form a twin set in $\up{2}S_{k,l}$. Invoking \cite[Corollary 3.10]{mon1}, these vertices do not admit strong cospectrality, and hence fail to exhibit LPGST. Thus, $l=1$. In this case, all eigenvalues except $1$ belong to the support of the vertex $w.$ Thus, vertices $(0,w)$ and $(1,w)$ are strongly cospectral in $\up{2}S_{k,l}$ by Theorem \ref{sc}(2), and so we again apply Theorem \ref{th1}. Arguing similarly as above yields
\begin{equation}
\label{sk1}
(m_2-m_1)\lambda_2+(m_3-m_1)\lambda_3=(m_0+m_1+m_2+m_3)-m_1(k+5)
\end{equation}
We again proceed with two cases. First, suppose $p(x)$ irreducible. The same argument above yields $m_1=m_2=m_3$. Thus, $m_0+m_1+m_2+m_3$ is even if and only if $m_0+m_1$ is even. From equation (\ref{sk1}), we have $m_0=m_1(k+2).$ If $k$ is odd, then $m_0$ and $m_1$ have the same parity, so $m_0+m_1$ is even. Thus, we get LPGST between $(0,w)$ and $(1,w)$ in $\up{2}S_{k,l}$ by Theorem \ref{th1}(4). But if $k$ is even, then we may let $m_0=k+2$ and $m_1=1$, then $m_0+m_1$ is odd so we do not get LPGST between $(0,w)$ and $(1,w)$ in $\up{2}S_{k,l}$ by Theorem \ref{th1}(4). Now, suppose $p(x)$ has exactly one integer eigenvalue, say $\lambda_2$. Using the same argument above, we may rewrite equation (\ref{sk1}) as
\begin{equation}
\label{sleq1}
m_0+m_2=(m_2-m_1)\lambda_2+m_1(k+3).
\end{equation}
First, suppose $k$ is even. Set $m_1=m_2=1$ and $m_0=k+2$, we get that $m_0$ and $m_1$ satisfy equation (\ref{sleq1}) and $m_0+m_2$ is odd. In this case, we do not get LPGST. Now, suppose $k$ is odd. We have two subcases. If $\lambda_2$ is odd, then $m_1=0$ and $m_2=1$ satisfies equation (\ref{sleq1}) but $m_0+m_2$ is odd. If $\lambda_2$ is even, then $m_0+m_2$ is always even as $k$ is odd. Hence, LPGST occurs in this case between $(0,w)$ and $(1,w)$ in $\up{2}S_{k,1}$ by Theorem \ref{th1}(4). Combining these two cases proves 2.
\end{proof}

\begin{rem}
If $u$ is a vertex of $S_{k,l}$ of degree $l+1$, then a criterion for LPGST between $(0,u)$ and $(1,u)$ in $\up{2}S_{k,l}$ is obtained by switching the roles of $k$ and $l$ in Theorem \ref{ds}(1). A similar observation holds for the case when $w$ is a leaf in $S_{k,l}$ attached to a vertex of degree $k+1$.
\end{rem}

\section{Edge perturbation}\label{sec:pert}

In Corollary \ref{hc1}, we have observed that if $v$ is a vertex in $G$ with $\sigma_v(G)\subset\Zl$ then $\up{n}G$ is periodic at the vertex $(j,v)$ for all $j$ and $n$ with period $\frac{2\pi}{n}.$ It is clear that if $n\equiv 0$ (mod 4) then $\up{n}G$ is periodic at the vertex $(j,v)$ for all $j$ and $n$ with period $\frac{\pi}{2}.$  In this case, we have $n\geq 3$, and so $\up{n}G$ does not exhibit strong cospectrality by Theorem \ref{sc}(1). Consequently, $\up{n}G$ cannot exhibit LPST. However, using the fact that $T_v=\cb{(j,v) \mid j\in \Zl_n}$ consists of pairwise false twins in $\up{n}G$, the insertion of an additional edge in $\up{n}G$ between vertices $(l,v),(m,v)\in T_v$ with $l\neq m$ results in LPST between $(l,v)$ and $(m,v)$ at $\frac{\pi}{2}$ \cite[Theorem 4]{pal8}. Moreover, the resulting graph is periodic at the remaining vertices in the set $T=\cb{(j,v) \mid  j\neq l,m}$ with period $\frac{\pi}{2}.$ Invoking \cite[Theorem 3]{pal8}, another edge can again be added between a pair of vertices in $T$ to have LPST $\frac{\pi}{2}.$ Applying this process inductively, we may generate new graphs exhibiting LPST by simply adding a \textit{matching} in $T_v$, which is a set of edges without common vertices. We summarize this as follows.

\begin{thm}\label{pt1}
Let $n\equiv 0$ (mod 4) and $v$ be a vertex in $G$ with $\sigma_v(G)\subset\Zl.$ Then the addition of a matching in $T_v$ results in LPST in $\up{n}G$ at $\frac{\pi}{2}$ between the end vertices of every edge inserted.  Moreover, the resulting graph is periodic at all vertices in $T_v$ that are not incident to the newly added edges with period $\frac{\pi}{2}.$ 
\end{thm}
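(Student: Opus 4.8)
The plan is to prove Theorem~\ref{pt1} by reducing it to the two-false-twin edge-perturbation results \cite[Theorems 3 and 4]{pal8}, which the preceding discussion has already invoked, and then running an induction on the edges of the inserted matching. First I would record the structural setup: since $\sigma_v(G)\subset\Zl$ and $n\equiv 0$ (mod 4), Corollary~\ref{hc1} tells us that $\up{n}G$ is periodic at every vertex $(j,v)\in T_v$ with period $\frac{2\pi}{nh}$ dividing $\frac{\pi}{2}$; in particular all of $T_v$ is simultaneously periodic at time $\frac{\pi}{2}$. By Proposition~\ref{twins}, the set $T_v$ consists of pairwise false twins in $\up{n}G$, so any two of its vertices are non-adjacent twins, which is precisely the hypothesis needed to apply the perturbation theorems of \cite{pal8}.

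Next I would set up the induction. Write the matching as $M=\{e_1,\dots,e_s\}$ with $e_i=\{(l_i,v),(m_i,v)\}$, where by the matching condition the $2s$ endpoints are all distinct. I would build the perturbed graph one edge at a time, letting $\Gamma_0=\up{n}G$ and $\Gamma_i=\Gamma_{i-1}+e_i$. The base case is the single-edge perturbation: inserting $e_1$ between the false twins $(l_1,v),(m_1,v)$ in the periodic (at $\frac{\pi}{2}$) graph $\Gamma_0$ yields, by \cite[Theorem 4]{pal8}, LPST between $(l_1,v)$ and $(m_1,v)$ at $\frac{\pi}{2}$, while \cite[Theorem 3]{pal8} guarantees that $\Gamma_1$ remains periodic at period $\frac{\pi}{2}$ at every vertex of $T_v$ not incident to $e_1$. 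For the inductive step I would argue that the endpoints $(l_i,v),(m_i,v)$ of the next matching edge are still false twins of each other in $\Gamma_{i-1}$ and still periodic at $\frac{\pi}{2}$ there, so the same two theorems apply to produce LPST across $e_i$ in $\Gamma_i$ and preserve $\frac{\pi}{2}$-periodicity at the untouched vertices of $T_v$.

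The main obstacle, and the step I would treat most carefully, is verifying that the twin relation and the periodicity are \emph{preserved along the induction} as edges accumulate. The subtlety is that after inserting $e_1,\dots,e_{i-1}$, the neighbourhoods of the remaining vertices of $T_v$ have been modified, so I must check that two endpoints of an as-yet-uninserted matching edge $e_i$ remain mutually false twins in $\Gamma_{i-1}$: this is exactly where the matching hypothesis is essential, since $e_i$ is vertex-disjoint from $e_1,\dots,e_{i-1}$, neither endpoint of $e_i$ has yet received a new edge, and both endpoints still see identical neighbourhoods (the common external neighbourhood inherited from $\up{n}G$ together with the absence of any of the new edges at these two vertices). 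Likewise I would verify that the $\frac{\pi}{2}$-periodicity of the untouched vertices, guaranteed after step $i-1$ by \cite[Theorem 3]{pal8}, is precisely the periodicity hypothesis needed to reapply \cite[Theorem 4]{pal8} at step $i$. Since at each stage the LPST and the residual periodicity all occur at the common time $\frac{\pi}{2}$, the times line up across all steps, and after inserting all $s$ edges we obtain simultaneous LPST at $\frac{\pi}{2}$ across every matching edge together with $\frac{\pi}{2}$-periodicity at every vertex of $T_v$ not saturated by $M$, which is the assertion of the theorem.
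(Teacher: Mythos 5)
Your proposal is correct and follows essentially the same route as the paper: the paper's justification is exactly the informal induction you describe, combining Corollary \ref{hc1} (periodicity of $\up{n}G$ at each vertex of $T_v$ at time $\frac{\pi}{2}$ when $4\mid n$), the fact that $T_v$ is a set of pairwise false twins, and repeated application of Theorems 3 and 4 of \cite{pal8} edge by edge along the matching, with the vertex-disjointness of the matching guaranteeing that the twin and periodicity hypotheses persist at each step. The only slip is a citation: the fact that $T_v$ itself is a twin set is stated in the text preceding Proposition \ref{twins} (that proposition concerns $T_u\cup T_v$ for distinct false twins $u,v$), but this does not affect the argument.
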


According to \cite[Theorem 3]{pal8}, the addition of a matching in $\up{n}G$ as described above works as long as the vertices in the matching belong to a set of twins. Thus, if $v$ and $u$ are false twins, then $T_v\cup T_u$ is a twin set by Proposition \ref{twins}, and so the conclusion in Theorem \ref{pt1} still works if we add a matching in $T_v\cup T_u$. Thus, the following is immediate.

\begin{cor}\label{pt2}
Let $n\equiv 0$ (mod 4). If $u$ and $v$ are false twins in $G$ with $\sigma_v(G)\subset\Zl$, then the addition of a matching in $T_v\cup T_u$ results in LPST in $\up{n}G$ at $\frac{\pi}{2}$ between the end vertices of every edge inserted. Moreover, the resulting graph is periodic at all vertices in $T_v$ that are not incident to the newly added edges with period $\frac{\pi}{2}.$ 
\end{cor}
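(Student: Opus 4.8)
The plan is to reduce the statement directly to Theorem \ref{pt1} by verifying that the enlarged set $T_v\cup T_u$ plays exactly the role that $T_v$ played there. First I would invoke Proposition \ref{twins}: since $u$ and $v$ are false twins in $G$, the set $T_v\cup T_u$ is a twin set in $\up{n}G$, so all of its elements are pairwise twins. This is precisely the hypothesis under which the edge-addition results \cite[Theorems 3 and 4]{pal8} operate, since those results require only that the matched vertices lie in a common set of twins.

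Next I would check that $\up{n}G$ is periodic at every vertex of $T_v\cup T_u$ at time $\frac{\pi}{2}$. Because $u$ and $v$ are false twins we have $N(u)=N(v)$, hence $d_u=d_v$; and being twins, $u$ and $v$ are strongly cospectral, so $\sigma_u(G)=\sigma_v(G)$. Therefore $\sigma_u(G)\subset\Zl$ follows from the hypothesis $\sigma_v(G)\subset\Zl$, and Corollary \ref{hc1} then guarantees that $\up{n}G$ is periodic at $(j,u)$ and at $(j,v)$ for every $j\in\Zl_n$. When $n\equiv 0\pmod 4$ the common period is $\frac{\pi}{2}$, exactly matching the setup preceding Theorem \ref{pt1}.

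With these two facts in hand, the inductive argument used to establish Theorem \ref{pt1} applies verbatim with $T_v$ replaced by $T_v\cup T_u$. Concretely, I would add the edges of the matching one at a time: by \cite[Theorem 4]{pal8}, inserting an edge between two periodic twins in $T_v\cup T_u$ produces LPST between its two endpoints at $\frac{\pi}{2}$, while by \cite[Theorem 3]{pal8} the perturbed graph remains periodic at $\frac{\pi}{2}$ at all untouched twins; this keeps the hypotheses intact so the process can be continued on the remaining edges of the matching. In particular the unmatched vertices of $T_v$ (indeed of $T_v\cup T_u$) retain period $\frac{\pi}{2}$, which gives the ``moreover'' clause.

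The only point requiring genuine attention is that the edge-perturbation results hold for the combined twin set rather than merely within $T_v$ or $T_u$ separately. This is exactly what Proposition \ref{twins} secures, as it upgrades the statement ``$u,v$ are false twins'' to ``$T_v\cup T_u$ is a single twin set.'' Once this reduction is in place there is no remaining obstacle, and the corollary follows immediately.
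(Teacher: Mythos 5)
Your proposal is correct and follows essentially the same route as the paper: Proposition \ref{twins} upgrades the false-twin hypothesis to the statement that $T_v\cup T_u$ is a single twin set, after which the periodicity observation and the inductive application of \cite[Theorems 3 and 4]{pal8} from Theorem \ref{pt1} apply unchanged. One small correction: twins need not be \emph{strongly} cospectral (a vertex can be strongly cospectral with at most one other vertex, so a twin set of size at least $3$ rules this out); they are merely cospectral, via the automorphism transposing them, but that already yields $\sigma_u(G)=\sigma_v(G)$, which is all your argument actually needs.
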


In general, we may be begin with any Laplacian integral graph $G$, and apply \cite[Theorem 3]{pal8} and \cite[Theorem 4]{pal8} successively to $\up{n}G$ where $n\equiv 0$ (mod 4) to construct new graphs having LPST. 

Combining Theorem \ref{pt1} and Corollary \ref{pt2} yields the following result.

\begin{cor}\label{pt3}
Let $n\equiv 0$ (mod 4) and $v$ be a vertex in $G$ with $\sigma_v(G)\subset\Zl.$
\begin{enumerate}
\item If $v$ has no false twins in $G$, then the addition of a perfect matching in $T_v$ results in LPST in $\up{n}G$ at $\frac{\pi}{2}$ between the end vertices of every edge inserted.
\item If $v$ has a false twin $u$ in $G$, then the addition of a perfect matching in $T_v\cup T_u$ results in LPST in $\up{n}G$ at $\frac{\pi}{2}$ between the end vertices of every edge inserted. 
\end{enumerate}
\end{cor}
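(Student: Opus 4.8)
The plan is to derive Corollary \ref{pt3} as a direct specialization of Theorem \ref{pt1} (for part 1) and Corollary \ref{pt2} (for part 2) to the case in which the inserted matching is \emph{perfect}. The only genuinely new input required is the existence of a perfect matching on the relevant twin set, and this reduces entirely to a parity count.

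For part 1, I would first note that $n\equiv 0\pmod{4}$ forces $n$ to be even, so $|T_v|=n$ is even and hence $T_v$ admits a perfect matching $M$. Since $T_v$ is a twin set in $\up{n}G$ and $\sigma_v(G)\subset\Zl$ by hypothesis, Theorem \ref{pt1} applies to $M$ without modification and yields LPST at $\frac{\pi}{2}$ between the end vertices of every edge of $M$. Because $M$ is perfect, every vertex of $T_v$ is incident to exactly one inserted edge, so the clause of Theorem \ref{pt1} concerning periodicity at the unmatched vertices of $T_v$ is vacuous here; what remains is exactly the asserted conclusion.

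For part 2, I would invoke Proposition \ref{twins}: since $u$ is a false twin of $v$, the union $T_v\cup T_u$ is a twin set in $\up{n}G$. As $u\neq v$, the sets $T_v$ and $T_u$ are disjoint, so $|T_v\cup T_u|=2n$ is even and a perfect matching $M'$ of $T_v\cup T_u$ exists. Applying Corollary \ref{pt2} to $M'$ then gives LPST at $\frac{\pi}{2}$ between the end vertices of each edge of $M'$, again leaving no unmatched vertices.

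I do not anticipate a serious obstacle: the whole argument rests on the parity facts that $n$ and $2n$ are even, combined with the already-established Theorem \ref{pt1} and Corollary \ref{pt2}. The single point worth stating carefully is the role of the false-twin dichotomy. It is not needed for the \emph{validity} of either conclusion, since Theorem \ref{pt1} carries no false-twin hypothesis; rather, it merely selects which twin set, $T_v$ or the larger $T_v\cup T_u$, is to carry the perfect matching, and thereby how many LPST pairs are produced.
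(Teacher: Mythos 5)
Your proposal is correct and follows exactly the paper's route: the paper derives Corollary \ref{pt3} by simply combining Theorem \ref{pt1} and Corollary \ref{pt2}, with the existence of the perfect matchings being immediate from the evenness of $|T_v|=n$ and $|T_v\cup T_u|=2n$. Your added remark that the false-twin dichotomy only selects which twin set carries the matching, rather than being needed for validity, is an accurate reading of the statement.
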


We demonstrate Theorem \ref{pt1} and Corollary \ref{pt3}(1) using the following example.

\begin{figure}[h]
\centering
\begin{tikzpicture}[scale=0.3,auto=left]
\tikzstyle{every node}=[draw, circle, thick, fill=black!0, scale=0.25]

 \node (0) at (0,5.5) {\LARGE$(0,u)$};
 \node (1) at (4,8) {\LARGE$(1,u)$};
 \node (2) at (8,8) {\LARGE$(2,u)$};
 \node (3) at (12,5.5) 
 {\LARGE$(3,u)$};

 \node (4) at (-4,-1) {\LARGE$(0,v)$};
 \node (5) at (-4,-5) {\LARGE$(1,v)$};
 \node (6) at (-2,-8) {\LARGE$(2,v)$};
 \node (7) at (2,-10) {\LARGE$(3,v)$};

 \node (8) at (16,-1) {\LARGE$(0,w)$};
 \node (9) at (16,-5) {\LARGE$(1,w)$};
 \node (10) at (14,-8) {\LARGE$(2,w)$};
 \node (11) at (10,-10) {\LARGE$(3,w)$};

  \foreach \x in {4,5,6,7,8,9,10,11}{
   \draw[thick,black!70] (0)--(\x)--(1);
  }
  \foreach \x in {4,5,6,7,8,9,10,11}{
   \draw[thick,black!70] (2)--(\x)--(3);
  }
  \foreach \x in {8,9,10,11}{
   \draw[thick,black!70] (4)--(\x)--(5);
  }
  \foreach \x in {8,9,10,11}{
   \draw[thick,black!70] (6)--(\x)--(7);
  }

  \draw[thick,dashed] (0)--(2);
  \draw[thick,dashed] (1)--(3);
  \draw[thick,dashed] (4)--(5);
  \draw[thick,dashed] (6)--(7);
  \draw[thick,dashed] (9)--(10);
  \draw[thick,dashed] (8)--(11);
  
 \end{tikzpicture}
 \caption{LPST in $\up{4}K_3$ between the end vertices of the dashed edges.}
 \label{lpco}
 \end{figure}
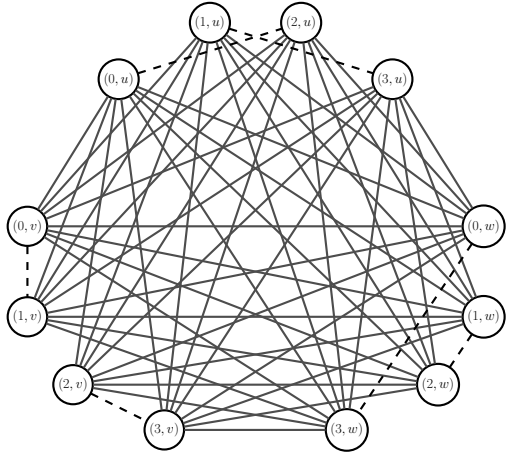

\begin{exm}
Consider $G=K_3,$  on  vertices $u,v,w$. Here $\up{4}G$ can be realised as a complete $3$-partite graph as shown in Figure \ref{lpco} with partite sets $T_u,T_v$ and $T_w.$  As $G$ is an integral graph, addition of any set of pairwise non-adjacent edges in $T_u,T_v$ and $T_w$ results LPST at $\frac{\pi}{2}$ between the end vertices of every edge inserted.    
\end{exm}

We end this section with an illustration of Theorem \ref{pt1} and Corollary \ref{pt3}(2).

\begin{exm}
Consider $G=C_4,$ on vertices $\{a, b, c, d\}$ with edges $\{a,b\},\{b,c\},\{c,d\},\{d,a\}$. Note that $\{a,c\}$ and $\{b,d\}$ are pairs of false twins in $C_4$. As $G$ is an integral graph, addition of any set of pairwise non-adjacent edges in $T_a\cup T_c$ and $T_b\cup T_d$ results LPST at $\frac{\pi}{2}$ between the end vertices of every edge inserted, see Figure \ref{lpco1}.   
\end{exm}

\begin{figure}[h]
\centering
\begin{tikzpicture}[scale=.35,auto=left]
\tikzstyle{every node}=[draw, circle, thick, fill=black!0, scale=0.25]

 \node (0) at (-5,0.5) {\LARGE$(0,a)$};
 \node (1) at (-3,2.5) {\LARGE$(1,a)$};
 \node (2) at (-1,4.5) {\LARGE$(2,a)$};
 \node (3) at (1,6.5) {\LARGE$(3,a)$};
 
 \node (4) at (-5,-5) {\LARGE$(0,b)$};
 \node (5) at (-3,-7) {\LARGE$(1,b)$};
 \node (6) at (-1,-9) {\LARGE$(2,b)$};
 \node (7) at (1,-11) {\LARGE$(3,b)$};

 \node (8) at (13,-5) {\LARGE$(0,c)$};
 \node (9) at (11,-7) {\LARGE$(1,c)$};
 \node (10) at (9,-9) {\LARGE$(2,c)$};
 \node (11) at (7,-11) {\LARGE$(3,c)$};

  \node (12) at (13,0.5) {\LARGE$(0,d)$};
 \node (13) at (11,2.5) {\LARGE$(1,d)$};
 \node (14) at (9,4.5) {\LARGE$(2,d)$};
 \node (15) at (7,6.5) {\LARGE$(3,d)$};

  \foreach \x in {4,5,6,7,12,13,14,15}{
   \draw[thick,black!70] (0)--(\x)--(1);
  }
  \foreach \x in {4,5,6,7,12,13,14,15}{
   \draw[thick,black!70] (2)--(\x)--(3);
  }
  \foreach \x in {8,9,10,11}{
   \draw[thick,black!70] (4)--(\x)--(5);
  }
  \foreach \x in {8,9,10,11}{
   \draw[thick,black!70] (6)--(\x)--(7);
  }
  \foreach \x in {12,13,14,15}{
   \draw[thick,black!70] (8)--(\x)--(9);
  }
  \foreach \x in {12,13,14,15}{
   \draw[thick,black!70] (10)--(\x)--(11);
  }

  \draw[thick,dashed] (4)--(15);
  \draw[thick,dashed] (5)--(14);
  \draw[thick,dashed] (6)--(13);
  \draw[thick,dashed] (7)--(12);
  \draw[thick,dashed] (0)--(11);
  \draw[thick,dashed] (1)--(10);
  \draw[thick,dashed] (2)--(9);
  \draw[thick,dashed] (3)--(8);
  
 \end{tikzpicture}
 \caption{LPST in $\up{4}C_4$ between the end vertices of the dashed edges.}
 \label{lpco1}
 \end{figure}
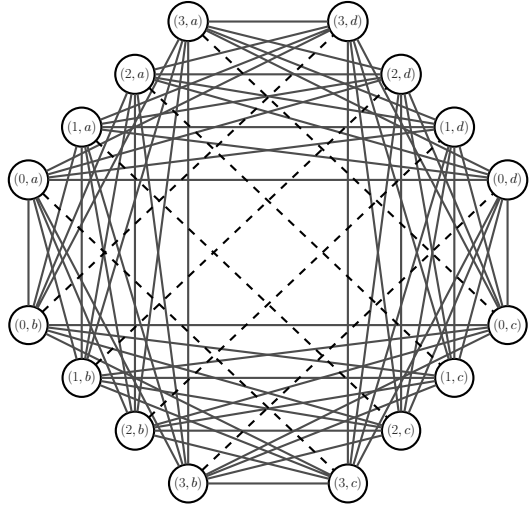

\section{Future work}\label{sec:fw}

We investigated Laplacian quantum walks on blow-up graphs. We characterized periodicity, strong cospectrality, LPST and LPGST in blow-up graphs. Several constructions of blow-up graphs with LPST were also presented. This produced infinite families of regular blow-up graphs  each vertex of which is involved in LPST, though the underlying graphs do not admit LPST. To inspire further work in this topic, we pose the following questions.

A graph $G$ is \textit{weakly Hadamard diagonalizable} (WHD) if its Laplacian matrix is diagonalizable by a weak Hadamard matrix, i.e.  a matrix $H$ with entries from the set $\{0,\pm 1\}$ such that $H^TH$ is tridiagonal. LPST was studied in WHD graphs \cite{mclaren2023weak}.
In line with our result in Corollary \ref{hd}, we ask: when does a blow-up of a WHD graph admit LPST?

Lastly, we ask: does Theorem \ref{pt1} hold for the adjacency case? That is, if $n\equiv 0$ (mod 4), will the addition of an appropriate matching in $\up{n}G$ induce adjacency LPST?

\bigskip

\noindent \textbf{Acknowledgement.}
H.\ Monterde is supported by the University of Manitoba Faculty of Science and Faculty of Graduate Studies. S.\ Kirkland is supported by NSERC grant number RGPIN-2025-05547.

\bibliographystyle{abbrv}
\bibliography{References}
\end{document}